\newtheorem{thm}{Theorem} 
\newtheorem{lemma}{Lemma}
\theoremstyle{definition}\newtheorem{defn}{Definition}
\newtheorem{rmk}{Remark}
\def\tmix{t_{\mathrm{mix}}}
\newcommand{\xth}{^\text{th}}
\newcommand{\tc}{\tau_{\mathrm{couple}}}
\def\cR{\mathcal{R}}
\def\cL{\mathcal{L}}
\def\cC{\mathcal{C}}
\def\tmix{t_{\mathrm{mix}}}
\def\tv{_{\mathrm{TV}}}
\def\P{\mathbb{P}}
\def\E{\mathbb{E}}
\def\eps{\varepsilon}
\DeclareMathOperator{\var}{Var}
\newcommand{\indic}[1]{\mathbf{1}_{\{#1\}}}
\title{Partial mixing of semi-random transposition shuffles}
\author{Richard Pymar \thanks{LAREMA -- UMR CNRS 6093, Universit\'{e} d'Angers, 2 Boulevard Lavoisier, 49045 Angers cedex 01; Part of this work was done when the author was an EPSRC-funded PhD student at the University of Cambridge \texttt{richard.pymar@univ-angers.fr}}}
\date{\today}
\begin{document}
\maketitle

\begin{abstract}
We show that for any semi-random transposition shuffle on $n$ cards, the mixing time of any given $k$ cards is at most $n\log k$, provided $k=o((n/\log n)^{1/2})$.  In the case of the top-to-random transposition shuffle we show that there is cutoff at this time with a window of size $O(n)$, provided further that $k\to\infty$ as $n\to\infty$ (and no cutoff otherwise).  For the random-to-random transposition shuffle we show cutoff at time $(1/2)n\log k$ for the same conditions on $k$. Finally, we analyse the cyclic-to-random transposition shuffle and show partial mixing occurs at time $\le\alpha n\log k$ for some $\alpha$ just larger than 1/2.  We prove these results by relating the mixing time of $k$ cards to the mixing of one card. Our results rely heavily on coupling arguments to bound the total variation distance.
\end{abstract}

\section{Introduction}

We denote by $S_n$ the symmetric group on $n$ elements which we shall view as a deck of $n$ labeled cards. We use $[n]$ to denote $\{1,\ldots,n\}$. We are interested in a notion of mixing of these $n$ cards when the shuffling mechanism is semi-random transposition shuffling. At every step of such a shuffle we choose 2 cards -- one with our left hand and one with our right (independently of each other and of the past choices). We then switch the positions of the two selected cards.  We denote by $L_t$ and $R_t$ the location chosen by the left hand and right hand at time $t$, respectively.  The left hand is allowed to choose a card according to any rule, deterministic or stochastic, and it may also depend on time.  The right hand chooses a card uniformly.

As well as stating a result for general semi-random transposition shuffles, we also focus on 3 particular shuffles. These are the cyclic-to-random transposition shuffle, with $L_t=t \mod n$, the top-to-random transposition shuffle, with $L_t=1$, and the random-to-random transposition shuffle, with $L_t\sim$Unif($[n]$).


We first state our results and then present some background and motivation.  We shall denote by $\mu^\sigma_{c_1,\ldots,c_k}$ the joint law of the locations of cards $c_1,\ldots,c_k$ in a permutation $\sigma$. Let $\sigma_t$ be a permutation at time $t$ which starts at time 0 from permutation $\sigma_0$ and evolves by a given semi-random transposition shuffle. Let $\pi$ be a uniform permutation.  We define $d_{c_1,\ldots,c_k}(t)$ by
\[
d_{c_1,\ldots,c_k}(t):=\max_{\sigma_0}\|\mu^{\sigma_t}_{c_1,\ldots,c_k}-\mu^\pi_{c_1,\ldots,c_k}\|\tv
,\]
where $\|\mu-\nu\|\tv$ is the total variation distance between two measures $\mu$ and $\nu$.
Let $\Omega_k$ denote the set of all subsets of $[n]$ of size $k$.
\begin{defn}
We define the $k$-\emph{partial mixing time} (at level $\eps$) of a shuffle to be \[
  \tmix^k(\eps):=\min\Big\{t\ge0:\,\max_{\{c_1,\ldots,c_k\}\in\Omega_k}d_{c_1\ldots,c_k}(t)<\eps\Big\}.
  \]
\end{defn}

Our main result is the following.

\begin{thm}\label{T:main}Suppose $k=o((n/\log n)^{1/2})$ and fix $\eps>0$. For any semi-random transposition shuffle with $k$-partial mixing time $\tmix^k(\eps)$ and any $\delta>0$, there exists $n_0=n_0(\delta)$ such that for all $n>n_0$,
\begin{flalign*}
(i).\quad&\tmix^k(\eps)\le \tmix^1((\eps-\delta)/k),\quad\text{and moreover,}&\\
(ii).\quad&\tmix^k(1/4)\le n(\log k+3/2).&
\end{flalign*}

\end{thm}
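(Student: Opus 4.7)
The plan is to couple two chains $(\sigma_t)$ and $(\tilde\sigma_t)$ started from arbitrary $\sigma_0$ and from stationary $\pi$ respectively, both driven by the same left-hand sequence $(L_t)$ (independent of state by hypothesis), and to design the joint right-hand coupling so that the positions of each card $c_i$ coalesce efficiently. Writing $X_t^{(i)} := \sigma_t(c_i)$ and $Y_t^{(i)} := \tilde\sigma_t(c_i)$, I will define the ``good event'' $G_t$ that the pairs $\{X_t^{(i)}, Y_t^{(i)}\}$ over uncoalesced cards are mutually disjoint subsets of $[n]$. On $G_t$, take $R_t^{(1)}$ uniform on $[n]$ and set $R_t^{(2)} := \Pi_t(R_t^{(1)})$, where $\Pi_t$ is the involution swapping $X_t^{(i)} \leftrightarrow Y_t^{(i)}$ for each uncoalesced $c_i$; on $G_t^c$, fall back to the identity coupling. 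Both marginals of $R_t^{(2)}$ remain uniform, so this is valid.

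With $\tau_i := \inf\{t : X_t^{(i)} = Y_t^{(i)}\}$, a case analysis on whether $L_t$ equals $X_t^{(i)}$, $Y_t^{(i)}$, or neither shows that on $G_t$, every uncoalesced card has per-step coalescence probability at least $1/n$ (and in fact this matches the one-step maximal coupling of the two single-card marginals). Hence $\P(\tau_i > t,\, G_{\le t}) \le (1-1/n)^t \le e^{-t/n}$, and a sharper analysis exploiting that the restriction of the joint coupling to card $c_i$ is a valid single-card coupling should give $\P(\tau_i > t,\, G_{\le t}) \le d_1(t)$ in terms of the actual single-card TV distance. A union bound then yields
\begin{flalign*}
d_k(t) \le \P(\exists i : \tau_i > t) \le k \cdot d_1(t) + \P(G_{\le t}^c).
\end{flalign*}

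For Part (i), set $t := \tmix^1((\eps - \delta)/k)$: then $k \cdot d_1(t) \le \eps - \delta$, so Part (i) follows once one establishes $\P(G_{\le t}^c) \le \delta$ for $n$ large. For Part (ii), I will use the worse-but-unconditional bound $d_1(t) \le e^{-t/n}$ and set $t := n(\log k + 3/2)$, which gives $k \cdot e^{-t/n} = e^{-3/2} < 1/4$; the collision correction $\P(G_{\le t}^c) = o(1)$ then keeps the total below $1/4$ for $n$ sufficiently large.

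The main technical obstacle will be bounding $\P(G_{\le T}^c)$ sharply. A naive pair-union bound gives $\P(G_t^c) = O(k^2/n)$ per step, summing to $O(k^2 \log k)$ over $T = O(n\log k)$ steps — which is not $o(1)$ when $k \to \infty$ under the hypothesis $k = o((n/\log n)^{1/2})$ alone. The resolution will need to exploit either the exponential decay in the number of uncoalesced cards over time (so the effective count of contributing pairs shrinks after time $\asymp \tmix^1$) or the near-stationarity of the marginals $X_t^{(i)}, Y_t^{(i)}$ under the coupling (so that each pair collision contributes $\approx 1/n$ only briefly), yielding a sharpened estimate compatible with the sparsity hypothesis. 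A secondary delicate step is upgrading the elementary estimate $(1-1/n)^t$ to the actual $d_1(t)$ needed for Part (i), which hinges on the single-card restriction of the joint coupling being near-optimal on the good event.
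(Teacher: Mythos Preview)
Your direct two-chain involution coupling is a different route from the paper's, and for Part~(i) it carries a real gap. The paper does not couple $\sigma$ to $\pi$ directly. It introduces $k$ \emph{independent} auxiliary chains $\sigma^1,\dots,\sigma^k$ (all started from $\sigma_0$) and shows in Lemma~\ref{L:kcoupling} that the joint law of $(c_1,\dots,c_k)$ in a single chain lies within $O(tk^2/n^2 + k^2\log t/n)$ of the \emph{product} $\mu^{\sigma^1_t}_{c_1}\times\cdots\times\mu^{\sigma^k_t}_{c_k}$. Applying this on both the $\sigma$- and $\pi$-sides and using the elementary tensorisation bound $\|\prod_i\mu_i-\prod_i\nu_i\|\tv\le\sum_i\|\mu_i-\nu_i\|\tv$ gives $d_k(t)\le o(1)+k\max_i d_i(t)$, with the actual single-card distance $d_i(t)$ appearing automatically---no optimal single-card coupling is ever built.

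Your proposed step ``the restriction of the joint coupling to card $c_i$ is a valid single-card coupling, hence $\P(\tau_i>t,\,G_{\le t})\le d_1(t)$'' has the inequality reversed: a valid coupling certifies only $d_1(t)\le \P(X_t^{(i)}\ne Y_t^{(i)})$. On $G_{\le t}$ your coupling for card $c_i$ coincides with the Lemma~\ref{onecard} coupling, yielding $\P(\tau_i>t,\,G_{\le t})\le (1-1/n)^t$, and this is in general strictly larger than $d_1(t)$ (for cyclic-to-random, Lemma~\ref{onecardcyclic} shows $d_1(t)$ decays roughly twice as fast). So your scheme can reach Part~(ii) at best; Part~(i) genuinely needs the product-measure detour.

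For the collision term $\P(G_{\le T}^c)$, neither of your suggested repairs is the right lever. In your coupling a new collision is created essentially when $L_t$ sits at a special card's position and the uniform $R_t$ then lands on a different special position---a conditional probability $O(k/n)$. The obstruction is bounding how often the \emph{first} event occurs: since the left-hand rule is arbitrary, this is not $O(k/n)$ per step a priori. The paper isolates exactly this in the non-trivial Lemma~\ref{leftatspecial}: because each left-hand selection throws the card to a uniform location, the left hand needs order $n$ further steps to find it again, and a renewal/Chebyshev argument gives at most $O(k(t/n+\log t))$ such selections in expectation. That is what makes the error $O(tk^2/n^2+k^2\log t/n)$, matching the hypothesis $k=o((n/\log n)^{1/2})$; decay of the uncoalesced count or near-stationarity of marginals does not control this left-hand frequency.
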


Theorem \ref{T:main} allows us to calculate an upper bound on the $k$-partial mixing time of any semi-random transposition shuffle by just considering the movement of one card in the deck.

\begin{thm}\label{cyclicupper}
 Suppose $k=o((n/\log n)^{1/2})$. For the cyclic-to-random transposition shuffle, there exists a constant $C>0$, such that for all $n$ sufficiently large, \[
 \tmix^k(1/4)\le 0.5006n(\log k+C)
 .\]
\end{thm}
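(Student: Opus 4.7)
The strategy is to combine the reduction in Theorem~\ref{T:main}(i) with a sharpened one-card mixing bound for the cyclic-to-random shuffle. Specifically, fixing any small $\delta > 0$ and applying Theorem~\ref{T:main}(i) gives
\[
\tmix^k(1/4) \le \tmix^1\!\big((1/4-\delta)/k\big),
\]
so the problem reduces to bounding the one-card mixing time $\tmix^1(\eps)$ at level $\eps$ of order $1/k$. To obtain the target constant $0.5006$, it suffices to establish a bound of the form $\tmix^1(\eps) \le \alpha n \log(1/\eps) + O(n)$ for some $\alpha$ strictly below $0.5006$ and only slightly above $1/2$.

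The position $X_t$ of the target card evolves as a time-inhomogeneous Markov chain on $[n]$ driven by the deterministic $L_t = t \bmod n$ and the uniform $R_t$. I would construct a coupling of two copies of this chain started from different initial permutations, sharing the same $L_t$ but drawing the right-hand positions $(R_t^X, R_t^Y)$ from a jointly optimised distribution with uniform marginals. Within each cycle of $n$ consecutive steps there are at most two ``special'' steps at which the left hand visits the current position of $X_t$ or $Y_t$, deterministically sending the corresponding card to the uniform $R_t$; at each such step the two chains coalesce with probability $\Theta(1/n)$. At the $n-2$ generic steps, a shift-coupled choice of the right-hand positions (for example $R_t^Y = R_t^X + Y_t - X_t \pmod n$) yields coalescence with probability $\Theta(1/n)$ whenever both right hands simultaneously pick their respective targets. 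Combining these contributions over each cycle and iterating yields exponential decay of the uncoupled probability at the desired rate.

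Converting via the coupling inequality to a TV bound and substituting $\eps = (1/4-\delta)/k$ into the one-card estimate yields
\[
\tmix^k(1/4) \le \alpha n \log k + O(n) \le 0.5006\, n(\log k + C)
\]
for a suitable constant $C$ and all $n$ sufficiently large, completing the proof.

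The main obstacle is extracting the leading constant $\alpha$ just above $1/2$: the ``naive'' coupling which only counts left-hand randomizations produces $\alpha = 1$, coinciding with the general bound of Theorem~\ref{T:main}(ii). To improve upon this, the argument must carefully account for the extra coupling gained from both the left-hand special steps and the right-hand picks at generic steps together, showing that the aggregate per-step coupling rate is essentially $2/n$ rather than $1/n$. Pinning down this constant precisely---in particular verifying it comes in strictly below $0.5006$---is the delicate quantitative part of the proof.
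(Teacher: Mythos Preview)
Your overall architecture is right and matches the paper: apply Theorem~\ref{T:main}(i) to reduce to a one-card estimate, then prove a sharpened bound $\max_i d_i(t)\le c\,e^{-\gamma t/n}$ with $\gamma$ close to $2$. The gap is in how you obtain that one-card rate.

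The shift coupling you describe, $R_t^Y = R_t^X + (Y_t - X_t)\pmod n$, produces coalescence exactly when $R_t^X = X_t$ (both cards are then sent to $L_t$), which has probability $1/n$ per step. This is precisely the generic rate of Lemma~\ref{onecard}, yielding only $e^{-t/n}$ and hence $\alpha=1$. The ``special'' steps where $L_t$ lands on one of the cards do not help: there are $O(1)$ such steps per cycle of length $n$, and at each of them coalescence still has probability $O(1/n)$, contributing $O(1/n)$ per cycle, i.e.\ $O(1/n^2)$ per step. So the two sources you list are not additive in the way you suggest; the left-hand contribution is negligible, and you are stuck at rate $1/n$. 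The random-to-random trick of coupling both hands (giving rate $2/n$) is unavailable here because $L_t$ is deterministic.

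The paper's mechanism for extracting the extra factor is quite different from counting per-step hits. It exploits the cyclic structure across \emph{time}: if at time $\tau$ the left hand selects card $i$ in one deck and sends it to location $R_\tau$, then, provided the two $i$-cards are currently ``close'' (within $\eps n$), the left hand will reach card $i$ in the other deck within $\eps n$ further steps. At that later time one sets the right-hand choice in the second deck equal to $R_\tau$, and coalescence succeeds whenever the first card is still sitting at $R_\tau$, i.e.\ whenever $R_\tau$ lies far enough ahead of $L_\tau$ on the cycle. This succeeds with probability roughly $1-\eps/2$ per ``close'' encounter. The analysis is then packaged into a three-state Markov chain on $\{C,F,S\}$ (close, far, success), one step of which corresponds to at most $(1+\eps)n$ shuffle steps; optimising $\eps\approx 0.442$ makes the subdominant eigenvalue about $0.237$, and combining with the independent $e^{-t/n}$ from the right-hand coupling yields the effective rate just under $2/n$ and the constant $0.5006$. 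None of this structure is present in your sketch, and without it the argument cannot beat $\alpha=1$.
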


\begin{defn}
  We say that a shuffle has \emph{$k$-partial cutoff at time $\tmix$ with a window of size $\omega(n)=o(\tmix)$} if the following two conditions hold:
\begin{flalign*}
  (i).\quad &\lim_{\alpha\to\infty}\limsup_{n\to\infty}\max_{(c_1,\ldots,c_k)\in \Omega_k}d_{c_1,\ldots,c_k}(\tmix+\alpha\omega(n))=0,&\\
  (ii).\quad &\lim_{\alpha\to -\infty}\liminf_{n\to\infty}\max_{(c_1,\ldots,c_k)\in \Omega_k}d_{c_1,\ldots,c_k}(\tmix+\alpha\omega(n))=1.&
\end{flalign*}
\end{defn}

\begin{thm}\label{T:top}Suppose $k=o((n/\log n)^{1/2})$ and also $k\to\infty$ as $n\to\infty$.
Then the top-to-random transposition shuffle has $k$-partial cutoff at time $n\log k$ with a window of size $O(n)$. Furthermore, if $k<K$, for some constant $K$ for all $n$, then there is no cutoff.
\end{thm}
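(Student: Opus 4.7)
The plan is to combine the one-card analysis of the top-to-random shuffle with the reduction in Theorem~\ref{T:main}(i) for the upper half of the cutoff, and a coupon-collector-type computation for the lower half; the no-cutoff statement for bounded $k$ will then follow from the same lower bound together with the linear-in-$n$ upper bound from Theorem~\ref{T:main}(ii).

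For the upper bound I would first bound $\tmix^1(\eps)$ for top-to-random. A card $c$ initially at position $p\ne 1$ can move only when the right hand selects $p$, so the first such time $T$ is $\mathrm{Geom}(1/n)$. At time $T$ the card sits in position $1$, and at time $T+1$ it is swapped to $R_{T+1}$, which is uniform on $[n]$. Since the single-card marginal chain is doubly stochastic, uniform is stationary and the card's law is exactly uniform on $\{T+1,T+2,\ldots\}$; a coupling argument then yields $\tmix^1(\eps)\le n\log(1/\eps)+O(1)$. Plugging this into Theorem~\ref{T:main}(i) with, say, $\delta=\eps/2$ gives $\tmix^k(\eps)\le n\log k+n\log(2/\eps)+O(1)$, which is the cutoff upper bound with window $O(n)$.

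For the lower bound I would fix an initial permutation $\sigma_0$ sending each $c_j$ to a distinct position $p_j\ne 1$. Let $U_t$ count the cards among $c_1,\ldots,c_k$ never touched by time $t$; equivalently, $U_t$ is the number of the $p_j$ never selected by the right hand during $[1,t]$. Then $\E[U_t]=k(1-1/n)^t$, and because at most one $p_j$ can be chosen per step, the indicator events that $p_j$ is never selected are pairwise negatively correlated, whence $\var(U_t)\le\E[U_t]$. For $t=n\log k+\alpha n$ with $k\to\infty$, one has $\E[U_t]\to e^{-\alpha}$, and Chebyshev's inequality then gives $\P(U_t=0)\le 1/\E[U_t]\to e^{\alpha}$. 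The distinguishing event $E=\{\exists j:\sigma_t(c_j)=p_j\}$ contains $\{U_t\ge 1\}$ under $\sigma_0$, while under $\pi$ a union bound gives $\P_\pi(E)\le k/n\to 0$. Hence $d_{c_1,\ldots,c_k}(t)\ge 1-e^{\alpha}-k/n\to 1$ as $\alpha\to-\infty$, completing the cutoff proof.

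For the no-cutoff claim when $k\le K$ is bounded, $\tmix^k(1/4)=\Theta(n)$ by Theorem~\ref{T:main}(ii) and the preceding paragraph. A direct inclusion-exclusion on the right-hand choices gives, for any fixed $c>0$, $\P(U_{cn}=0)\to(1-e^{-c})^k$, so $d_{c_1,\ldots,c_k}(cn)\to 1-(1-e^{-c})^k\in(0,1)$. Because any cutoff window must satisfy $\omega(n)=o(\tmix^k)=o(n)$, the times $\tmix^k+\alpha\omega(n)$ all lie within $o(n)$ of a fixed multiple of $n$, and along such times the distance has a non-trivial limit in $(0,1)$, contradicting either clause of the cutoff definition. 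The main obstacle throughout is the second-moment control of $U_t$ as $k$ grows with $n$; the negative-correlation observation is precisely what allows the simple Chebyshev bound to remain valid right up to the borderline $k=o(\sqrt{n/\log n})$ without any additional logarithmic loss.
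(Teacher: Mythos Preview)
Your proof is correct and follows essentially the same route as the paper: upper bound via Theorem~\ref{T:main} and the one-card estimate $d_i(t)\le(1-1/n)^t$, lower bound via a second-moment (Chebyshev) count of special cards never touched by the right hand, and no-cutoff for bounded $k$ from $d(cn)$ remaining strictly inside $(0,1)$. The paper differs only tactically: it computes $\var(X_t)$ directly (yielding the same bound $\var\le\E$ as your negative-correlation argument), uses a threshold $C=2/\eps$ rather than $1$ for the distinguishing event, and for bounded $k$ invokes the single-card lower bound $d(t)\ge(1-1/n)^t-1/n$ in place of your inclusion--exclusion.

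Two minor points to tighten. First, in the no-cutoff paragraph the definition gives $\omega(n)=o(\tmix)$ for the \emph{hypothetical} cutoff time $\tmix$, not for $\tmix^k(1/4)$; you need the standard step that any cutoff forces $\tmix\sim\tmix^k(1/4)=\Theta(n)$, after which monotonicity of $d$ (the $k$-position marginal chain is time-homogeneous for top-to-random) together with your bound $\liminf_n d(cn)\ge 1-(1-e^{-c})^k>0$ contradicts clause~(i) directly, without assuming that $\tmix/n$ converges to a single constant or that $\lim_n d(cn)$ exists. Second, your closing sentence misattributes the constraint $k=o((n/\log n)^{1/2})$: it enters only through the upper-bound coupling of Theorem~\ref{T:main} (Lemma~\ref{L:kcoupling}), not through the second-moment lower bound, which is valid for much larger $k$.
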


\begin{thm}\label{T:random}
Suppose $k=o((n/\log n)^{1/2})$ and also $k\to\infty$ as $n\to\infty$.
Then the random-to-random transposition shuffle has $k$-partial cutoff at time $0.5n\log k$ with a window of size $O(n)$. Furthermore, if $k<K$, for some constant $K$ for all $n$, then there is no cutoff.\end{thm}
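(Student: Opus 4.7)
The plan is to give matching upper and lower bounds on the $k$-partial TV distance at time $\tfrac{1}{2}n\log k \pm \alpha n$. The new ingredient compared with the top-to-random case is that in the random-to-random shuffle both hands are uniform, so each tagged card is touched at rate $\approx 2/n$ per step instead of $\approx 1/n$; this is precisely what produces the factor $1/2$ in $0.5 n \log k$.

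For the upper bound I would invoke Theorem \ref{T:main}(i) and analyse a single tagged card. A card at position $p$ moves in a given step iff exactly one of $L_t, R_t$ equals $p$, which has probability $q := 2(n-1)/n^2$; conditional on moving, its new position is uniform on $[n]\setminus\{p\}$. Decomposing the distribution of the card's position into the ``no move'' and ``at least one move'' parts, and using monotonicity of TV under the Markov kernel, gives
\[
d_{c_1}(t) \le (1-q)^t + \tfrac{1}{n} \le e^{-2t/n} + \tfrac{1}{n},
\]
hence $\tmix^1(\eps') \le (n/2)\log(1/\eps') + O(n)$. Plugging $\eps' = (\eps-\delta)/k$ into Theorem \ref{T:main}(i) then yields $\tmix^k(\eps) \le 0.5 n \log k + O(n)$, which is the upper side of cutoff with window $O(n)$.

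For the lower bound I would use a coupon-collector / second-moment argument. Starting from $\sigma_0 = \mathrm{id}$, let $N_t$ denote the number of tagged cards in $\{c_1,\dots,c_k\}$ that have never been involved in a transposition by time $t$; such cards remain at their initial positions, so the number $F_t$ of tagged cards fixed in place satisfies $F_t \ge N_t$. Under a uniform permutation, $\E[F] = k/n \to 0$ since $k = o((n/\log n)^{1/2})$, so $F = 0$ with high probability in the target measure. It therefore suffices to show $\P(N_t \ge 1) \to 1$ at $t = 0.5 n \log k - \alpha n$ as $\alpha \to \infty$. One computes $\E[N_t] = k(1-q)^t \to e^{2\alpha}$; and the probability that two distinct tagged cards are both untouched in a single step is $1 - 4/n + 6/n^2$, slightly less than $(1-q)^2 = 1 - 4/n + 8/n^2 + O(1/n^3)$, so the ``untouched'' indicators are weakly negatively correlated and $\var(N_t) \le \E[N_t]$. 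Chebyshev's inequality then gives $\P(N_t \ge 1) \to 1$ as $\alpha \to \infty$.

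The main technical care will be to ensure the accumulated second-moment error from the small pairwise negative correlations stays negligible over the $\Theta(n\log k)$ time steps; the hypothesis $k = o((n/\log n)^{1/2})$ is precisely what is needed to make this $o(1)$, mirroring its use in Theorem \ref{T:main}. Finally, for the no-cutoff statement when $k<K$ is bounded, note that $\E[N_t] = k(1-q)^t$ declines smoothly from $k$ to $0$ over a window of order $n$, which is itself the order of the full mixing time; consequently no window $\omega(n) = o(\tmix)$ can separate the mixed from the unmixed regimes, and cutoff fails.
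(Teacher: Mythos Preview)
Your approach is essentially the paper's: invoke Theorem~\ref{T:main}(i) together with a sharpened single-card bound for the upper side, and run a coupon-collector/Chebyshev argument on untouched tagged cards for the lower side. For the single-card bound the paper uses a two-hand coupling (matching card $i$ the first time one hand picks it in $\pi$ while the other avoids both $i$-positions), whereas you analyse the one-card chain directly; both yield $d_i(t)\lesssim e^{-2t/n}$.

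Two corrections are worth making. First, your remark about ``accumulated second-moment error'' is backwards: because the untouched-indicators are \emph{negatively} correlated you get $\var(N_t)\le\E[N_t]$ immediately, and the hypothesis $k=o((n/\log n)^{1/2})$ is not needed anywhere in the lower bound---it enters only through Theorem~\ref{T:main} on the upper side. Second, your no-cutoff sketch is incomplete. Saying that $\E[N_t]$ ``declines smoothly over a window of order $n$'' gives intuition but does not rule out cutoff: your $N_t$ argument only supplies a lower bound on $d$ at \emph{early} times. You also need a lower bound at \emph{late} times showing $d$ does not collapse too fast. The paper does this with a single card: $\P(\sigma_t^{-1}(c_1)=\sigma_0^{-1}(c_1))\ge(1-1/n)^{2t}$ while this probability is $1/n$ under $\pi$, giving $d(0.5n\log k+\alpha n)\ge e^{-2\alpha}/K-o(1)$. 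Combined with the upper bound $d\le e^{-2\alpha}$, this sandwiches $d$ strictly between $0$ and $1$ for every fixed $\alpha$, which is what actually forces the window to be $\Omega(n)=\Omega(\tmix)$ and hence precludes cutoff.
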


The study of mixing times of Markov chains and the search for cutoff is a much-studied area of probability. In terms of walks on the symmetric group, random transpositions are one of the most natural and simplest models. It was first shown by \cite{shah} that the random-to-random transposition shuffle has cutoff at time $0.5n\log n$ with a window of size $O(n)$.  Their technique uses Fourier analysis on the symmetric group. Since then, this result has been shown with a strong-stationary time argument by \cite{matthews} and with coupling arguments by \cite{kcycle} and independently by \cite{blum}.

Theorem \ref{T:main} gives an upper bound of $n\log k$ on the $k$-partial mixing time of any semi-random transposition shuffle.  Regarding full mixing, it has been shown by \cite{saloff} and independently by \cite{ganap} that the mixing time of any semi-random transposition shuffle is at most $n\log n$, with the top-to-random having cutoff at this time.

The cyclic-to-random transposition shuffle was invented by \cite{thorp} and the question of its mixing time was posed by \cite{aldous}. Unlike with the top-to-random and random-to-random shuffles, after just $n$ steps of the cyclic-to-random shuffle, every card has almost surely been selected with either the left hand or the right hand at least once.  The standard coupon-collector argument for establishing a lower bound does not give the correct order for the mixing time of this shuffle.  The current best known lower bound is about 0.12$n\log n$, shown by \cite{peresmoss} obtained by analysing the eigenfunctions of the transition matrix for the movement of a single card.  The best known upper bound is the general $n\log n$ of any semi-random transposition shuffle. It remains an open question to determine if and when cutoff occurs for this shuffle.

The notion of studying the evolution of only some of the cards in a deck has been previously considered by \cite{riffle}. Here they calculate the distribution of the location of one card after a step of riffle shuffling. For the cyclic-to-random transposition shuffle the distribution of the location of one card after a round ($n$ steps) has been calculated by \cite{pinsky} and independently by \cite{pymar}. It is a natural extension to try to understand how $k$ cards in a deck evolve and our results reveal interesting dynamics.  As with full mixing, the top-to-random shuffle is the slowest shuffle in terms of partial mixing, and using the random-to-random shuffle halves this time.  We also find that the cyclic-to-random does not take much longer to mix $k$ cards than the random-to-random (and may even be much faster).

\emph{Structure of the rest of the paper:} In section \ref{S:main}, we give the proof of Theorem \ref{T:main} by first studying the movement of one card and then the joint movement of a given $k$ cards.  Next, in section \ref{S:cutoff} we show partial cutoff of the top-to-random and random-to-random shuffles by using Theorem \ref{T:main}. We then give our upper bound of the $k$-partial mixing time of the cyclic-to-random shuffle in section \ref{S:cyclic}, before finally presenting some open questions in section \ref{S:further}.



\section{Proof of Theorem \ref{T:main}}\label{S:main}

\begin{defn}
For each non-negative integer $t$, let $\ell_t$ be a distribution on $[n]$.  We say that a permutation evolves by \emph{$(\ell_t)_{t\ge0}$-to-random} if it evolves according to a semi-random transposition shuffle and for each $t$ the choice of the left hand at time $t$ is chosen according to the distribution $\ell_t$.
\end{defn}

We begin by studying the mixing time of one card for any $(\ell_t)_{t\ge0}$-to-random transposition shuffle.  Let $(\sigma_t)_{t\ge0}$ denote the state at time $t$ of a permutation that evolves by $(\ell_t)_{t\ge0}$-to-random, started from permutation $\sigma_0$, and $(\pi_t)_{t\ge0}$ a permutation also evolving by $(\ell_t)_{t\ge0}$-to-random, started from a uniform permutation $\pi_0$. We denote by $\sigma^{-1}(i)$ the location occupied by the card with label $i$ in a permutation $\sigma$. Recall that for each $t$, $L_t$ and $R_t$ are independent from each other and from past choices of the left and right hands.

\begin{lemma}\label{onecard} For every $(\ell_t)_{t\ge0}$-to-random transposition shuffle,
 \[
\max_{i\in[n]}d_i(t):=\max_{i\in [n]}\max_{\sigma_0}\| \mu_i^{\sigma_t}-\mu_i^{\pi_t}\|\tv \le \exp(-t/n).\]
\end{lemma}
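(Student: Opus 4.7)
The plan is to bound the single-card total variation distance by a coupling argument. Fix $i\in[n]$ and an initial configuration $\sigma_0$, and set $X_t:=\sigma_t^{-1}(i)$ and $Y_t:=\pi_t^{-1}(i)$ for the locations of card $i$ in the two chains; note that, since the uniform distribution is invariant under any semi-random transposition shuffle, $\pi_t$ is uniform for every $t$ and hence $Y_t$ is marginally uniform on $[n]$. The one-step update rule for the location of card $i$ reads: if $X_t=L_t$, then $X_{t+1}=R_t$; if $X_t\neq L_t$, then $X_{t+1}=L_t$ when $R_t=X_t$ and $X_{t+1}=X_t$ otherwise; and analogously for $Y_t$ driven by its own right hand $R_t'$.

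I will drive both chains with the same left-hand sequence $(L_t)$ sampled from the given $\ell_t$, and couple the two right hands step by step conditional on $(X_t,Y_t,L_t)$, keeping each right hand marginally uniform on $[n]$. When $X_t=Y_t$, I set $R_t'=R_t$, preserving the coupling. When $X_t\neq Y_t$ (so at most one of $X_t,Y_t$ equals $L_t$), I choose $(R_t,R_t')$ to maximise $\P(X_{t+1}=Y_{t+1})$. If $X_t=L_t$ and $Y_t\neq L_t$, then $X_{t+1}=R_t$ is uniform on $[n]$ while $Y_{t+1}$ equals $L_t$ with probability $1/n$ and $Y_t$ with probability $(n-1)/n$, so pairing the atoms at $L_t$ and at $Y_t$ yields coupling probability $2/n$ (with the case $Y_t=L_t$ symmetric). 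If neither $X_t$ nor $Y_t$ equals $L_t$, each chain jumps to $L_t$ with probability $1/n$, and coupling the two "jump" events $\{R_t=X_t\}$ and $\{R_t'=Y_t\}$ yields coupling probability $1/n$.

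Consequently, in every case $\P(X_{t+1}=Y_{t+1}\mid X_t\neq Y_t,\mathcal{F}_t)\ge 1/n$, and since the coupling is absorbing, iterating gives
\[
\P(X_t\neq Y_t)\le\Bigl(1-\frac{1}{n}\Bigr)^t\le e^{-t/n}.
\]
The coupling inequality $\|\mu_i^{\sigma_t}-\mu_i^{\pi_t}\|\tv\le\P(X_t\neq Y_t)$ then yields the claim, uniformly in $i$ and $\sigma_0$. The only genuine thing to verify is that the prescriptions above across the different subcases of $(X_t,Y_t,L_t)$ piece together into a single joint law of $(R_t,R_t')$ whose two marginals are each uniform on $[n]$; this is an elementary explicit construction, after which the argument reduces to the geometric-decay calculation displayed above.
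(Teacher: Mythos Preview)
Your proof is correct and follows essentially the same coupling strategy as the paper: share the left hand between the two decks, couple the right-hand choices so that each step matches the two copies of card $i$ with probability at least $1/n$, and read off the geometric bound $(1-1/n)^t\le e^{-t/n}$.

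The only difference is in how the right hands are coupled. The paper uses a single ``swap'' rule that works uniformly in $L_t$: it takes one reference uniform $R_t$ and, whenever $R_t$ lands on $\sigma_{t-}^{-1}(i)$ or $\pi_{t-}^{-1}(i)$, exchanges those two values between the decks, so that card $i$ is selected by the right hand simultaneously in both decks or in neither; the first such selection (a $\mathrm{Geom}(1/n)$ time) sends both copies to $L_t$. Your construction instead conditions on whether $L_t$ hits $X_t$, $Y_t$, or neither, and builds a maximal coupling of $(R_t,R_t')$ in each subcase. This is slightly more work to verify (the uniform-marginal check you flag) but is in fact a little sharper, since you recover probability $2/n$ rather than $1/n$ when $L_t$ hits one of the two positions---an improvement you do not need here but which is in the spirit of the refinement the paper later uses for the random-to-random shuffle.
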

\begin{proof}
We use the coupling description of the total variation distance, that is,\[
\|\mu-\nu\|_{\mathrm{TV}}=\inf\{\P(X\neq Y):\,(X,Y)\text{ is a coupling of $\mu$ and $\nu$}\}
.\]

For each $t$, we use the same $L_t$ for both processes $\sigma_t$ and $\pi_t$.  We denote by $t-$ the time just before the $t\xth$ transposition.

To describe the joint evolution of $\sigma_t$ and $\pi_t$, we choose at time $t$ the card in location $R_t$ with the right hand in both processes unless $R_t\in\{\sigma_{t-}^{-1}(i),\pi_{t-}^{-1}(i)\}$.  If $R_t=\sigma_{t-}^{-1}(i)$, we select position $\sigma_{t-}^{-1}(i)$ in deck $\pi_t$ and select position $\pi_{t-}^{-1}(i)$ in deck $\sigma_t$ (so that card $i$ is not chosen in either deck).  On the other hand, if $R_t=\pi_{t-}^{-1}(i)$ we select position $\pi_{t-}^{-1}(i)$ in $\pi_t$ and position $\sigma_{t-}^{-1}(i)$ in $\sigma_t$ (so that card $i$ is chosen in both decks). It is clear that the marginal distributions are correct -- in each deck the probability of selecting a particular card with the right hand is $1/n$, independently of all past choices.

The result of this coupling is that once card $i$ is chosen with the right hand in $(\pi_t)$, the locations of card $i$ in both processes will become and then stay forever equal.  The time to select card $i$ in $(\pi_t)$ is Geom$(1/n)$. Thus we have \[
\P(\sigma_t^{-1}(i)=\pi_t^{-1}(i))\ge 1-\exp(-t/n)
,\]and the bound on the total variation distance follows.
\end{proof}

We refer to the $k$ cards of interest $c_1,\ldots,c_k$ as the \emph{special} cards.
We present now a bound on the number of times the left hand chooses one of the special $k$ cards up to time $t$, for any $(\ell_t)_{t\ge0}$-to-random transposition shuffle. Choose such a shuffle and let $(L_t)_{t\ge0}$ be an independent sequence of random variables such that $L_t\sim \ell_t$ for each $t$.

\begin{lemma}\label{leftatspecial}
There exists a constant $C$ such that for any choice of $k$ special cards, for each $t\ge0$ and $n$ sufficiently large,
$$
\E\Big(\sum_{s=1}^t \indic{\exists i\in[k]:\, L_s=\sigma_{s-}^{-1}(c_i)}\Big)\le Ck\Big(\frac{t}{n}+\log t\Big).
$$
\end{lemma}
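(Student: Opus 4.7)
The plan is to exploit that the first step at which the left hand picks a given special card $c_i$ acts as a strong stationary time for the location of $c_i$.  Since the positions of $c_1,\ldots,c_k$ at any given time $s-$ are distinct, the indicator in the lemma equals $\sum_{i=1}^k\indic{L_s = \sigma_{s-}^{-1}(c_i)}$, so by linearity it suffices to bound, for each $i$, the quantity $\E[N_i(t)]$ where $N_i(t):=\#\{s\le t:L_s = \sigma_{s-}^{-1}(c_i)\}$ is the number of times the left hand selects card $c_i$.

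For a fixed $i$, set $\tau_i^\star:=\inf\{s\ge 1:L_s = \sigma_{s-}^{-1}(c_i)\}$.  The event $\{\tau_i^\star=\tau\}$ is measurable with respect to $L_1,\ldots,L_\tau, R_1,\ldots,R_{\tau-1}$ and $\sigma_0$, whereas $R_\tau$ is uniform on $[n]$ and independent of these and of $L_\tau$.  The swap at step $\tau_i^\star$ sends card $c_i$ from its location $L_{\tau_i^\star}$ to position $R_{\tau_i^\star}$, so conditional on $\tau_i^\star$ the location $\sigma_{\tau_i^\star}^{-1}(c_i) = R_{\tau_i^\star}$ is uniform on $[n]$ and independent of the future hand choices $(L_s,R_s)_{s>\tau_i^\star}$.

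The next step is to verify that this uniformity is preserved by the shuffle: if at time $s$ the location $U:=\sigma_s^{-1}(c_i)$ is uniform on $[n]$ and independent of $(L_{s+1},R_{s+1})$, then so is $\sigma_{s+1}^{-1}(c_i)$.  This reduces to a short case analysis on whether $L_{s+1}=U$, whether $R_{s+1}=U$, or neither, using that $R_{s+1}$ is uniform and independent of $L_{s+1}$ and $U$.  By induction, for every $s>\tau_i^\star$ the conditional law of $\sigma_{s-}^{-1}(c_i)$ given $\{\tau_i^\star<s\}$ is uniform on $[n]$ and independent of $L_s$, whence
\[
\P\bigl(L_s = \sigma_{s-}^{-1}(c_i)\mid \tau_i^\star<s\bigr)=\E[\ell_s(\mathrm{Unif}[n])]=\frac{1}{n}.
\]
Splitting on $\tau_i^\star<s$, $\tau_i^\star=s$, $\tau_i^\star>s$ (the last case contributes zero), I obtain
\[
\E[N_i(t)] = \sum_{s=1}^t \P(\tau_i^\star = s) + \sum_{s=1}^t \frac{1}{n}\P(\tau_i^\star<s)\;\le\;1+\frac{t}{n},
\]
and summing over $i\in[k]$ yields $k(1+t/n)\le Ck(t/n+\log t)$ for any $C\ge 2$ as soon as $t$ exceeds an absolute constant; for smaller $t$ the trivial bound $\E[\sum_{s=1}^t\indic{\cdot}]\le t$ suffices.

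The main obstacle is the uniformity-preservation claim: although elementary, it requires careful tracking of the joint law of $(\sigma_s^{-1}(c_i),L_{s+1},R_{s+1})$ through the swap, in order to verify both that the new location is uniform \emph{and} that it remains independent of the next hand choices, so that the induction continues cleanly at every step after $\tau_i^\star$.
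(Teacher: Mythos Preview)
Your argument is correct and in fact yields a sharper inequality than the paper proves: you obtain $\E[N_i(t)]\le 1+t/n$, whereas the paper only gets $\E[N_i(t)]\le C(t/n+\log t)$.  The paper takes a quite different route: it studies the inter-arrival times $\tau_i(m)$ between successive left-hand selections of card $c_i$, shows that each $\tau_i(m)$ stochastically dominates an i.i.d.\ sequence $\hat\tau_i(m)$ built by rearranging the future $L$-values into a bijection of $[n]$, computes $\E[\hat\tau_i(m)]\ge k_1 n$ and $\var(\hat\tau_i(m))\le k_3 n^2$, and then applies Chebyshev to bound the tail of $N_i(t)$.  Summing these tail bounds produces the extra $\log t$ term.

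Your strong-stationary-time idea is both simpler and stronger.  The key observation---that once the left hand first touches $c_i$, its new location is $R_{\tau_i^\star}$, which is uniform and independent of all future $(L_r,R_r)$---immediately gives rate exactly $1/n$ from then on.  The uniformity-preservation step you flag as the ``main obstacle'' is in fact almost free: conditional on $(L_{s+1},R_{s+1})$, the transposition is a fixed bijection of $[n]$, so a uniform location stays uniform, and independence from $(L_r,R_r)_{r\ge s+2}$ is inherited since $\sigma_{s+1}^{-1}(c_i)$ is a function of variables independent of the future.  One small quibble: your handling of very small $t$ (in particular $t=1$, where $\log t=0$) does not quite recover the stated inequality, but this is a defect of the lemma's phrasing rather than of your method---the paper's own proof has the same issue, and the lemma is only ever applied with $t$ of order $n\log k$.
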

\begin{proof}
First note that we can write
\[
 \sum_{s=1}^t \indic{\exists i\in[k]:\, L_s=\sigma_{s-}^{-1}(i)} = \sum_{s=1}^t \sum_{i=1}^k \indic{L_s=\sigma_{s-}^{-1}(i)}
=\sum_{i=1}^k\sum_{s=1}^t \indic{L_s=\sigma_{s-}^{-1}(i)}.
\]
Let $N_i(t)$ be the number of times that the left hand selects card $i$ by time $t$, that is,
\[
 N_i(t)=\sum_{s=1}^t \indic{L_s=\sigma_{s-}^{-1}(i)}.
\]
Let $\tau_i(m)$ be the time between the $(m-1)\xth$ and the $m\xth$ selection of card $i$ with the left hand, that is,\[
\tau_i(m):=\min\big\{t:\,t>\sum_{\ell=1}^{m-1}\tau_i(\ell),\,L_t=\sigma_{t-}^{-1}(i)\big\}-\sum_{\ell=1}^{m-1}\tau_i(\ell)
.\]  Then $N_i(t)$ satisfies
\begin{align}
\{N_i(t)\ge x\}=\big\{\sum_{m=1}^x \tau_i(m)\le t\big\}. \label{Nit}
\end{align}
Notice now that $\tau_i(m)\ge\tilde\tau_i(m)$ where $\tilde\tau_i(m)$ is the amount of time after the $(m-1)\xth$ selection of card $i$ with the left hand and the next time card $i$ is selected (with either the left or right hand), that is, \[
\tilde\tau_i(m):=\min\big\{t:\, t>\sum_{\ell=1}^{m-1}\tau_i(\ell),\,\sigma_{t-}^{-1}(i)\in\{L_t,R_t\}\big\}-\sum_{\ell=1}^{m-1}\tau_i(\ell)
.\] We further define $T_i(r)$ to be the time at which card $i$ is chosen with the left hand for the $r\xth$ time.  That is, $ T_i(r)=\sum_{m=1}^{r} \tau_i(m).$ Note that for $s$ satisfying

\[
 T_i(r)\le s < T_i(r)+\tilde\tau_i(r+1),
\]

we have $\sigma_{s-}^{-1}(i)=R_{T_i(r)},$ where $R_t$ is the location chosen with the right hand at time $t$.  We show that for each $m>1$, $\E(\tilde\tau_i(m))\ge c_1n$, for some constant $c_1$.

Firstly, at each time $s>T_i(m-1)$ we have probability $1/n$ of choosing card $i$ with the right hand.  On the other hand, to select card $i$ with the left hand it has to be in a location accessible by the left hand. For example, in the case of top-to-random, the left hand can only ever select the card at the top of the deck. Therefore in this case if at time $t$ card $i$ was in any position other than at the top of the deck, it would certainly be chosen with the right hand before the left hand.  Furthermore, since after the left hand selects card $i$ it is moved to a uniform location, in order to minimize the expected time to select this card again, the next $n$ positions selected by the left hand $L_{T_i(m-1)+1}, \ldots, L_{T_i(m-1)+n}$ should be a bijection of $[n]$.  We formalize this idea below.  We let \begin{align*}
H&=\inf\{h\ge1:\, L_{T_i(m-1)+h}=R_{T_i(m-1)}\},\\
R&=\inf\{r\ge1:\, R_{T_i(m-1)+r}=R_{T_i(m-1)}\}.
\end{align*}

 Note that $H$ and $R$ are independent and that $R$ is geometrically distributed with success probability $1/n$.  Further, in the case where $L_{T_i(m-1)+1}, \ldots, L_{T_i(m-1)+n}$ is a bijection of $[n]$, $H$ is uniformly distributed on $[n]$.

 Depending on the semi-random transposition shuffle we are using, the $(\tilde\tau_i(m))_{m\ge1}$ may not be independent. However, we claim that for each $m>1$, $\tilde\tau_i(m)$ stochastically dominates $\hat\tau_i(m)$, independent identically distributed random variables with distribution $\min(H,R)$.  We use a coupling argument to show this stochastic domination. We define $\hat\tau_i(m)$ to be the amount of time after $T_i(m-1)$ until the left hand or right hand chooses card $i$ but now with different $L$-values.  We denote these new $n$ $L$-values after time $T_i(m-1)$ as $\hat L_i^1(m-1),$ \mbox{$\hat L_i^2(m-1),\ldots, \hat L_i^n(m-1)$}.  This list is a bijection of $[n]$ with the property that if the first occurrence of the value $j$ in $L_{T_i(m-1)+1}, L_{T_i(m-1)+2}\ldots,$ appears before the first occurrence of $j'$, that is if \[
\min\{ t>0:\, L_{T_i(m-1)+t}=j\}<\min\{t>0:\, L_{T_i(m-1)+t}=j'\},
\]
then $j$ must appear before $j'$ in $\hat L_i^1(m-1), \ldots, \hat L_i^n(m-1)$.

We let $J=\{j\in [n]:\, \exists\, t>0:\, L_{T_i(m-1)+t}=j\}$.  For $j\in [n]$ which do not occur in $L_{T_i(m-1)+1}, L_{T_i(m-1)+2},\ldots$, i.e. $j\in [n]\setminus J$, we use the rule that they are listed arbitrarily in $\hat L_i^1(m-1), \ldots, \hat L_i^n(m-1)$ but after all values that do occur.  That is, \[
\forall\, j_1\in J, j_2\in [n]\setminus J,\,\mathrm{if}\, \hat L_i^{t_1}=j_1, \hat L_i^{t_2}=j_2, \,\mathrm{then}\, t_1<t_2.
\]

At time $T_i(m-1)$ card $i$ is moved to location $R_{T_i(m-1)}$ and will remain there until chosen with the left hand or the right hand.  The choice of $\hat L_i^j(m-1)$ for $j\ge 1$ ensures that $\hat\tau_i(m)\le \tilde\tau_i(m)$.  The independence and identical distribution properties of $\hat\tau_i(m)$ for $m>1$, come from the fact that the location card $i$ is put into every time it is chosen with the left hand is chosen independently and uniformly on $[n]$ and that $\hat L_i^1(m-1), \ldots, \hat L_i^n(m-1)$ is a bijection of $[n]$. Taking $\hat\tau_i(1)=0$, we have that \begin{align}
 \P\Big(\sum_{m=1}^x\tau_i(m)\le t\Big)\le \P\Big(\sum_{m=1}^x\hat\tau_i(m)\le t\Big)\label{tauhat}
 .
 \end{align}

  It follows that for $m>1$,
\begin{align}\notag
\E(\hat\tau_i(m)) &= \sum_{h=1}^n \frac1{n}\E(\hat\tau_i(m)\,|\, H=h)= \frac1{n} \sum_{h=1}^n \sum_{r=1}^n \E(\tilde\tau_i(m)\,|\, H=h, R=r)\P(R=r)\\ \notag
&=\frac1{n} \sum_{h=1}^n \sum_{r=1}^n \E(\hat\tau_i(m)\,|\, H=h, R=r)\left(1-1/n\right)^{r-1}\frac1{n}\\ \notag
&=\frac1{n^2} \sum_{h=1}^n \Big(\sum_{r=1}^{h-1}r\left(1-1/n\right)^{r-1}+\sum_{r=h}^{n}h\left(1-1/n\right)^{r-1}\Big)\\
&=\frac1{2}(n-3)(1-1/n)^n+1\ge k_1n, \label{exphattau}
\end{align}
for some constant $k_1$.

Similarly, for each $m>1$,
\begin{align*}
\E((\hat\tau_i(m))^2) &= \frac1{n^2} \sum_{h=1}^n \Big(\sum_{r=1}^{h-1}r^2\left(1-1/n\right)^{r-1}+\sum_{r=h}^{n}h^2\left(1-1/n\right)^{r-1}\Big)\\
&\le k_2n^2,
\end{align*}
for some constant $k_2$. Using this and equation (\ref{exphattau}), we have that $\var(\hat\tau_i(m))=k_3 n^2$, for some constant $k_3$. Since the $\hat\tau_i(m)$ are independent for all $m\ge1$, we deduce that \begin{align}\var(\sum_{m=1}^x \hat\tau_i(m))=k_3(x-1)n^2.\label{vartau}\end{align}
From equations (\ref{Nit}) and (\ref{tauhat}), we have that \begin{align}\notag
\E\Big(\sum_{s=1}^t \indic{\exists i\in[k]:\, L_s=\sigma_{s-}^{-1}(c_i)}\Big)&=\sum_{i=1}^k\E(N_i(t))=\sum_{i=1}^k\sum_{x=1}^t\P(N_i(t)\ge x)\\
&=\sum_{i=1}^k\sum_{x=1}^t \P\Big(\sum_{m=1}^x\tau_i(m)\le t\Big)\le \sum_{i=1}^k\sum_{x=1}^t \P\Big(\sum_{m=1}^x\hat\tau_i(m)\le t\Big) \label{sumsum}
.
\end{align}
By Chebyshev's inequality and equation (\ref{vartau}), for any $\alpha>0$, $$
\P\Big(\Big|\sum_{m=1}^x\hat\tau_i(m)-\E\Big(\sum_{m=1}^x\hat\tau_i(m)\Big)\Big|\ge \alpha n\Big)\le \frac{k_3x}{\alpha^2}
.$$
Using equation (\ref{exphattau}) this gives us
$$
\P\Big( \sum_{m=1}^x\hat\tau_i(m)\le (k_1(x-1)-\alpha)n\Big)\le\frac{k_3x}{\alpha^2}
,$$and thus for $t<k_1(x-1)n$,
$$
\P\Big( \sum_{m=1}^x\hat\tau_i(m)\le t\Big)\le \frac{k_3x}{(k_1(x-1)-t/n)^2}
.$$
We now have \begin{align*}
\sum_{x=1}^t \P\Big(\sum_{m=1}^x\hat\tau_i(m)\le t\Big)&=\sum_{x=1}^{\lceil 1+t/(k_1n)\rceil} \P\Big(\sum_{m=1}^x\hat\tau_i(m)\le t\Big)+\sum_{x=\lceil 1+t/(k_1n)\rceil +1}^{t}\P\Big(\sum_{m=1}^x\hat\tau_i(m)\le t\Big)\\
&\le 2+t/(k_1n) + \sum_{x=\lceil 1+t/(k_1n)\rceil +1}^{t}\P\Big(\sum_{m=1}^x\hat\tau_i(m)\le t\Big)\\
&\le 2+t/(k_1n) + \sum_{x=\lceil 1+t/(k_1n)\rceil +1}^{t}\frac{k_3x}{(k_1(x-1)-t/n)^2}\\
&\le 2+t/(k_1n) + k_4\log t\\
&\le C\Big(\frac{t}{n}+\log t\Big)
\end{align*}
for some constants $k_4, C$.
Using this and equation (\ref{sumsum}) completes the proof.
\end{proof}

Let $\sigma_t^1,\ldots,\sigma_t^k$ denote the state at time $t$ of $k$ permutations, each evolving independently and by $(\ell_t)_{t\ge0}$-to-random shuffle and starting from a permutation $\sigma_0$. We describe a coupling of these $k$ permutations and a $(k+1)\xth$ permutation denoted $\sigma^0_t$ which starts from permutation $\sigma_0$ and also evolves by $(\ell_t)_{t\ge0}$-to-random shuffle (but not independently from the other permutations).  The coupling has the property that initially and for a period of time afterwards, the locations of special card $c_i$ in permutation $\sigma^i$ for each $1\le i\le k$ matches the location of card $c_i$ in permutation $\sigma^0$, see Figure \ref{F:cards}.

\begin{figure}[h!]
\begin{center}
\includegraphics[width=100mm]{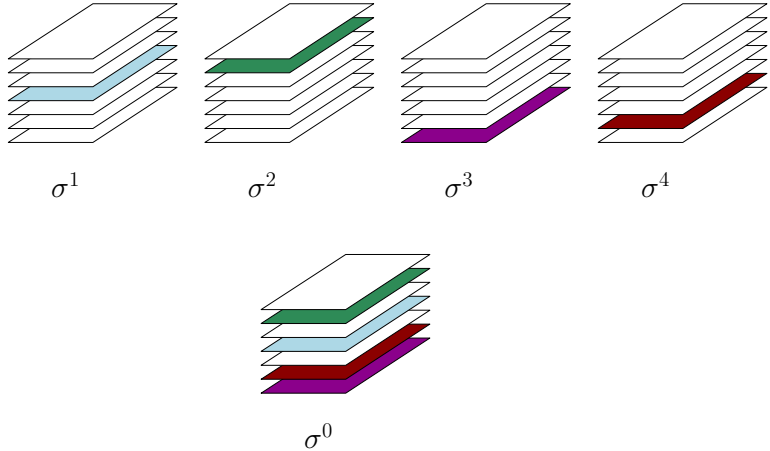} \caption{The desired property of the coupling}\label{F:cards}
\end{center}
\end{figure}

\begin{lemma}\label{L:kcoupling}
There exists a constant $\bar c$ such that for any choice of $k$ special cards, for each $t>0$ and $n$ sufficiently large,\[
\|\mu^{\sigma^0_t}_{c_1,\ldots,c_k}-\mu^{\sigma_t^1}_{c_1}\times\cdots\times\mu^{\sigma_t^k}_{c_k}\|_{\mathrm{TV}}\le \bar c\Big(\frac{tk^2}{n^2}+\frac{k^2\log t}{n}\Big)
.\]
\end{lemma}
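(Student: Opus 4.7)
The plan is to bound the total variation distance by constructing a coupling of $\sigma^0_t,\sigma^1_t,\ldots,\sigma^k_t$ in which $\sigma^1,\ldots,\sigma^k$ are i.i.d.\ copies of the shuffle, while $\sigma^0$ is coupled to them so that the identities $(\sigma^0_s)^{-1}(c_i)=(\sigma^i_s)^{-1}(c_i)$ hold simultaneously for every $i$ up to a failure time $\tau$. By the coupling characterisation of total variation it then suffices to show $\P(\tau\le t)\le\bar c(tk^2/n^2+k^2\log t/n)$.

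To build the coupling I would use the same left hand $L_s\sim\ell_s$ for all $k+1$ processes and independent Uniform$[n]$ right hands $R_s^1,\ldots,R_s^k$ for the independent copies. Writing $p_i=p_i(s)=(\sigma^i_{s-})^{-1}(c_i)$ for the common position of $c_i$ just before step $s$ (on the event the coupling still holds), I would define $R_s^0$ by: (a) if $L_s\notin\{p_1,\ldots,p_k\}$ and no $R_s^i$ equals $p_i$, draw $R_s^0$ uniformly from $[n]\setminus\{p_1,\ldots,p_k\}$, so no special card moves in $\sigma^0$; (b) if $L_s\notin\{p_1,\ldots,p_k\}$ and exactly one $R_s^i$ equals $p_i$, set $R_s^0=p_i$, so only $c_i$ moves in $\sigma^0$ and it ends up at $L_s$ as in $\sigma^i$; (c) if $L_s=p_j$ with $R_s^i\ne p_i$ for every $i\ne j$ and $R_s^j\notin\{p_i:i\ne j\}$, set $R_s^0=R_s^j$, so $c_j$ in $\sigma^0$ moves to the same new location as in $\sigma^j$ and no other special card moves. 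In every other configuration the constraints on $R_s^0$ imposed by the $k$ independent chains are mutually inconsistent; at the first such step I declare $\tau=s$ and from then on let $\sigma^0$ evolve with an independent Uniform$[n]$ right hand. A small amount of auxiliary randomness in the inconsistent cases makes $R_s^0$ marginally Uniform$[n]$ and hence $\sigma^0$ marginally $(\ell_s)$-to-random.

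The failure events at each step partition into two types whose probabilities I would bound separately. \emph{Right-hand collisions} — at least two of the independent $R_s^i$ satisfy $R_s^i=p_i$ simultaneously — have probability at most $\binom{k}{2}/n^2\le k^2/n^2$ per step, giving total contribution $tk^2/n^2$. \emph{Mixed collisions} require $L_s\in\{p_1(s),\ldots,p_k(s)\}$ together with a right-hand misalignment (some $R_s^i=p_i$ with $i\ne j$ while $R_s^j\ne p_i$, or $R_s^j=p_i$ for some $i\ne j$ while $R_s^i\ne p_i$); conditional on $\{L_s\in\{p_i(s)\}\}$ a union bound across $i\ne j$ shows these have probability at most $2(k-1)/n$. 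Applying Lemma~\ref{leftatspecial} to $\sigma^0$ gives
\[
\E\Big(\sum_{s=1}^t\indic{L_s\in\{p_i(s)\}}\Big)\le Ck(t/n+\log t),
\]
so the total mixed-collision contribution is at most $2C(k-1)k(t/n^2+\log t/n)$. Combining the two types yields the claimed bound with some absolute constant $\bar c$.

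The step I expect to require the most care is the coupling construction itself — specifically the bookkeeping needed to verify, via auxiliary randomness in the inconsistent cases, that $\sigma^0$ really does come out marginally $(\ell_s)$-to-random. Once this is set up rigorously the probabilistic estimates above are routine consequences of independence of the $R_s^i$ and Lemma~\ref{leftatspecial}.
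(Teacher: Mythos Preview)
Your strategy is exactly the paper's: couple $\sigma^0$ to the independent copies so that $(\sigma^0_s)^{-1}(c_i)=(\sigma^i_s)^{-1}(c_i)$ for all $i$ until a failure time, then bound $\P(\tau\le t)$ by splitting the failure events into those handled by Lemma~\ref{leftatspecial} and those with per-step probability $O(k^2/n^2)$. The gap is precisely where you flagged it, and it is a real one. Your rule for $R_s^0$ cannot be made marginally uniform by auxiliary randomness confined to the failure cases. In your case~(a), each non-special position already receives mass $(1-1/n)^k/(n-k)$, and this strictly exceeds $1/n$ for $k\ge2$ because $(1-1/n)^k>1-k/n$ by Bernoulli's inequality. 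Extra randomness on the disjoint failure event can only add mass elsewhere, never remove it from these positions; equivalently, the total mass your cases (a) and (b) place on the $k$ special positions is $(k/n)(1-1/n)^{k-1}$, and the entire failure probability $1-(1-1/n)^k-(k/n)(1-1/n)^{k-1}$ is not enough to bring this up to the required $k/n$. So conditional on $L_s$ missing the special cards, your $R_s^0$ is not uniform, and $\sigma^0$ is not marginally $(\ell_s)$-to-random.

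The paper's fix is a single additional ingredient. Whenever $L_s$ misses the special cards, first toss an independent coin that lands heads with probability $p=\big(k/n+(1-1/n)^k\big)^{-1}$; on tails (probability $1-p\le ck^2/n^2$) force $R_s^0$ onto a uniformly chosen special position and record this as a new potential-mismatch event (the paper's $E_2$), and only on heads proceed with a rule playing the role of your (a)/(b). The value of $p$ is chosen exactly so that the overshoot cancels and $R_s^0$ comes out uniform. The extra $E_2$ term contributes $O(tk^3/n^3)$, which is already absorbed by the bound you wrote down. With this coin inserted your argument goes through and matches the paper's; without it the marginal claim fails.
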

\begin{proof}We use a coupling argument to bound this total variation distance.  We use the same choices of $L_1,L_2,\ldots$, for all permutations. For the right hand, we let $R_t^i$ denote the location chosen by the right hand at time $t$ of permutation $\sigma^i$, $1\le i\le k$ (which are independent and uniformly distributed on $[n]$). We describe how to choose $R_t^0$, the choices of the right hand in permutation $\sigma^0$, so that it evolves by $(\ell_t)_{t\ge0}$-to-random with the desired property of Figure \ref{F:cards} for as long as possible.

Firstly, if $L_t$ selects one of the special cards in permutation $\sigma^0$, say $L_t=(\sigma^0_{t-})^{-1}(c_i)$, then we set $R^0_t=R^i_t$. On the other hand, if $L_t$ does not select one of the special $k$ cards, then we toss a coin which lands heads with probability $p$, independently on each toss, where \begin{align}
p:=\frac1{k/n+(1-1/n)^k}\ge1-ck^2/n^2\label{probheads}
,\end{align}for some constant $c$.
If the coin lands tails we choose $U$ uniformly on $\{(\sigma^0_{t-})^{-1}(c_i),\, 1\le i\le k\}$ and set $R_t^0=U$, that is, we choose a special card, each with equal probability.  If the coin lands heads, we choose $U$ uniformly on $[n]$.  If $U$ chooses a special card, that is $U=(\sigma^0_{t-})^{-1}(c_i)$ for some $1\le i\le k$, we set $R^0_t=R_t^i$. If $U$ chooses a non-special card we set $R^0_t=U$ unless at least one of the permutations $\sigma^1,\ldots,\sigma^k$ selects its special card with the right hand, that is \[
E:=\{1\le i\le k:\, R^i_t=(\sigma^i_{t-})^{-1}(c_i)\}\neq \emptyset
.\]In this case we choose $i$ uniformly from the set $E$ and let $R^0_t=R^i_t=(\sigma^i_{t-})^{-1}(c_i)$.

We show that at every step $t$ the choice of the right hand $R^0_t$ is uniformly distributed on $[n]$. Firstly, conditioned on the event $\{ L_t=(\sigma^0_{t-})^{-1}(c_i),\,\text{some }1\le i\le k \}$, then the choice of $R^0_t$ is clearly uniform. We have to check more carefully for the case where we condition on the complement of this event.  We fix non-special card $j$, and calculate the probability that $R^0_t=(\sigma^0_{t-})^{-1}(j)$. We have \begin{align*}
\P\left(R^0_t=(\sigma^0_{t-})^{-1}(j)\right)&=p\Big(\frac{k}{n}\cdot\frac1{n}+\frac1{n-k}\big(1-k/n\big)\big(1-1/n)^k\Big)\\
&=\frac{p}{n}\Big(\frac{k}{n}+\left(1-1/n\right)^k\Big)=\frac1{n}
,\end{align*}as desired. It follows by symmetry that the probability of selecting with the right hand a specific special card is also $1/n$ at every step.

We now let $A_t$ be the event that up to time $t$, the location of card $c_i$ in permutation $\sigma^i$ is equal to the location of card $c_i$ in permutation $\sigma^0$, for all $1\le i\le k$, that is,
\[
A_t:=\{(\sigma^i_{s})^{-1}(c_i)=(\sigma^0_{s})^{-1}(c_i), \,\text{for all }0\le s\le t, \,1\le i\le k\}
.\]
Certainly $A_0$ holds.  We list below the various situations which can result in a mismatch of the locations of the special cards at time $t$. If none of these situations occur by time $t$ then event $A_t$ will hold.

\begin{enumerate}
  \item Suppose $L_t=(\sigma^0_{t-})^{-1}(c_i),\,\text{for some }1\le i\le k$. A mismatch if $R_t^i=(\sigma^i_{t-})^{-1}(c_{i'})$ for some $i'\neq i$, which happens with probability $(k-1)/n,$ or $R_t^{i'}=(\sigma^{i'}_{t-})^{-1}(c_{i'})$ for some $i'\neq i$, which happens with probability $1-(1-1/n)^{k-1}.$
  \item Suppose $L_t$ chooses a non-special card, the coin lands tails and we choose some permutation, say $\sigma^i$, to copy for the right hand. A mismatch if $R_t^i=(\sigma^{i}_{t-})^{-1}(c_{i'})$ for some $i'\neq i$, which happens with probability $(k-1)/n,$ or $R_t^{i'}=(\sigma^{i'}_{t-})^{-1}(c_{i'})$ for some $i'\neq i$, which happens with probability $1-(1-1/n)^{k-1}.$
  \item Suppose $L_t$ chooses a non-special card, the coin lands heads and $U$ chooses a special card, say $c_i$.  A mismatch if $R_t^i=(\sigma^{i}_{t-})^{-1}(c_{i'})$ for some $i'\neq i$, which happens with probability ${(k-1)/n,}$ or $R_t^{i'}=(\sigma^{i'}_{t-})^{-1}(c_{i'})$ for some $i'\neq i$, which happens with probability ${1-(1-1/n)^{k-1}.}$
  \item Suppose $L_t$ chooses a non-special card, the coin lands heads and $U$ chooses a non-special card.  A mismatch if $\big|\{1\le i\le k:\,R_t^i=(\sigma^{i}_{t-})^{-1}(c_{i})\}\big|>1$, which happens with probability $1-(1-1/n)^k-(k/n)(1-1/n)^{k-1}$.
\end{enumerate}
For $1\le j\le 4$, we let $E_j(t)$ denote the number of times situation $j$ above occurs by time $t$. We use Lemma \ref{leftatspecial} to control $E_1(t)$. Let $C$ be the constant in Lemma \ref{leftatspecial} and $c$ the constant in equation (\ref{probheads}).  We have \begin{align*}
\E(E_1(t))&\le Ck(t/n+\log t)((k-1)/n+1-(1-1/n)^{k-1})\le 2C\frac{k^2}{n}(t/n+\log t),\\
\E(E_2(t))&\le t(1-p)((k-1)/n+1-(1-1/n)^{k-1})\le 2ct\frac{k^3}{n^3},\\
\E(E_3(t))&\le t\frac{k}{n}((k-1)/n+1-(1-1/n)^{k-1})\le 2t\frac{k^2}{n^2},\\
\E(E_4(t))&\le t(1-(1-1/n)^k-(k/n)(1-1/n)^{k-1})\le t\frac{k^2}{n^2}
.\end{align*}
We therefore have by Markov's inequality, \begin{align*}
\P(A_t)&\ge \P(E_1(t)+E_2(t)+E_3(t)+E_4(t)=0)\\
&\ge 1- \E(E_1(t))-\E(E_2(t))-\E(E_3(t))-\E(E_4(t))\\
&\ge 1-\bar c\Big(\frac{tk^2}{n^2}+\frac{k^2\log t}{n}\Big)
,\end{align*}
for some constant $\bar c$. This completes the proof.

\end{proof}

Putting together Lemmas \ref{onecard} and \ref{L:kcoupling} we are able to prove Theorem \ref{T:main}.

\begin{proof}[Proof of Theorem \ref{T:main}]

Let $\pi_0$ be a uniform permutation. Let $\pi^1_t,\ldots,\pi^k_t$ denote the state at time $t$ of $k$ permutations, each evolving independently by $(\ell_t)_{t\ge0}$-to-random shuffle and starting from permutation $\pi_0$. Further, let $\pi^0_t$ be another permutation at time $t$ evolving by the same shuffle and also starting from $\pi_0$ (however, not evolving independently from the others).

By the triangle inequality for total variation we have \begin{align*}
\|\mu^{\sigma^0_t}_{c_1,\ldots,c_k}-\mu^{\pi_0}_{c_1,\ldots,c_k}\|_{\mathrm{TV}}=
\|\mu^{\sigma^0_t}_{c_1,\ldots,c_k}-\mu^{\pi_t^0}_{c_1,\ldots,c_k}\|_{\mathrm{TV}}\le \,& \|\mu^{\sigma^0_t}_{c_1,\ldots,c_k}-\mu^{\sigma^1_t}_{c_1}\times\cdots\times\mu^{\sigma^k_t}_{c_k}\|\tv\\&+
\|\mu^{\sigma^1_t}_{c_1}\times\cdots\times\mu^{\sigma^k_t}_{c_k}-\mu^{\pi^1_t}_{c_1}\times\cdots\times\mu^{\pi^k_t}_{c_k}\|\tv\\&+
\|\mu^{\pi^1_t}_{c_1}\times\cdots\times\mu^{\pi^k_t}_{c_k}-\mu^{\pi_t^0}_{c_1,\ldots,c_k}\|\tv
.\end{align*}
For the first and last inequalities we use Lemma \ref{L:kcoupling} to give
\begin{align*}
\|\mu^{\sigma^0_t}_{c_1,\ldots,c_k}-\mu^{\pi_0}_{c_1,\ldots,c_k}\|_{\mathrm{TV}}&\le 2\bar c\Big(\frac{tk^2}{n^2}+\frac{k^2\log t}{n}\Big)+
\|\mu^{\sigma^1_t}_{c_1}\times\cdots\times\mu^{\sigma^k_t}_{c_k}-\mu^{\pi^1_t}_{c_1}\times\cdots\times\mu^{\pi^k_t}_{c_k}\|\tv\\
&\le 2\bar c\Big(\frac{tk^2}{n^2}+\frac{k^2\log t}{n}\Big)+k\max_{i\in\{c_1,\ldots,c_k\}}\| \mu_i^{\sigma_t^1}-\mu_i^{\pi_0}\|_{\mathrm{TV}}.
\end{align*}
Maximizing over the choice of the $k$ special cards and noticing that the first term on the right-hand side is $o(1)$, we obtain for all $n$ sufficiently large,\[
\max_{(c_1,\ldots,c_k)\in \Omega_k}\|\mu^{\sigma^0_t}_{c_1,\ldots,c_k}-\mu^{\pi_0}_{c_1,\ldots,c_k}\|_{\mathrm{TV}}\le \delta + k\max_{i\in [n]}\| \mu_i^{\sigma_t^1}-\mu_i^{\pi_0}\|_{\mathrm{TV}}
.\]
This holds for any $\sigma_0$ and thus as $n\to\infty$,
\begin{align}
  \max_{(c_1,\ldots,c_k)\in \Omega_k}d_{c_1,\ldots,c_k}(t)\le o(1)+k\max_{i\in[n]}d_i(t).\label{distance}
\end{align}

We deduce that\begin{align*}
\tmix^k(\eps)&:=\min\{t\ge0:\, \max_{(c_1,\ldots,c_k)\in \Omega_k}\|\mu^{\sigma^0_t}_{c_1,\ldots,c_k}-\mu^{\pi_0}_{c_1,\ldots,c_k}\|_{\mathrm{TV}}<\eps\}\\&\le
\min\{t\ge0:\, \delta+k\max_{i\in [n]}\| \mu_i^{\sigma_t^1}-\mu_i^{\pi_0}\|_{\mathrm{TV}}< \eps\}\\
&=\min\{t\ge0:\, \max_{i\in [n]}\| \mu_i^{\sigma_t^1}-\mu_i^{\pi_0}\|_{\mathrm{TV}}< (\eps-\delta)/k\}\\
&=\tmix^1((\eps-\delta)/k)
.\end{align*}
We apply Lemma \ref{onecard} which says that $\tmix^1(\eps)\le -n\log\eps$ to deduce that $\tmix^1((\eps-\delta)/k)\le -n\log((\eps-\delta)/k)$. Thus $\tmix^k(1/4)\le n(\log k-\log(1/4-\delta))$ and taking $\delta=1/4-e^{-3/2}$ gives the desired result that $\tmix^k(1/4)\le n(\log k+3/2)$.
 \qedhere
%
\end{proof}

\begin{rmk}
  The main idea of this proof is to show that the movement of the $k$ special cards is close to independent.  Our restriction on the value of $k$ comes into play here -- for $k$ larger than $n^{1/2}$ their movement will in fact no longer be close to independent (in the sense that there will be times at which the left and the right hands choose a special card at the same time).
\end{rmk}

\section{Cutoff for top-to-random and random-to-random}\label{S:cutoff}
We begin this section by showing cutoff of the partial mixing time of the top-to-random transposition shuffle. The lower bound is essentially the coupon-collector problem.
\begin{proof}[Proof of Theorem \ref{T:top}]

We shall show that for the top-to-random transposition shuffle the following conditions hold:
\begin{alignat*}{2}
  &(i).\qquad &&\limsup_{n\to\infty}\max_{(c_1,\ldots,c_k)\in \Omega_k}d_{c_1,\ldots,c_k}(n\log k+\alpha n)\le e^{-\alpha}.\qquad\phantom{aaaaaaaaaaaaaaaaaaaaaaaaaaaa}\\
  &(ii).\qquad &&\text{Suppose $k\to\infty$ as $n\to\infty$. Then}\\
  & \qquad &&\lim_{\alpha\to -\infty}\liminf_{n\to\infty}\max_{(c_1,\ldots,c_k)\in \Omega_k}d_{c_1,\ldots,c_k}(n\log k+\alpha n)=1.\\
&(iii).\qquad&&\text{Suppose $k$ is bounded above by constant $K$ for all $n$.  Then}\\
&\qquad &&\liminf_{n\to\infty}\max_{(c_1,\ldots,c_k)\in \Omega_k}d_{c_1,\ldots,c_k}(n\log k+\alpha n)\ge e^{-\alpha}/K.
\end{alignat*}

From equation (\ref{distance}), we have \[
\max_{(c_1,\ldots,c_k)\in \Omega_k}d_{c_1,\ldots,c_k}(n\log k+\alpha n)\le o(1)+k\max_{i\in [n]}d_i(n\log k+\alpha n)
.\]
However, from Lemma \ref{onecard}, \[
\max_{i\in [n]}d_i(n\log k +\alpha n)\le e^{-\alpha}/k
.\]

For statement (ii), we show that for each $\eps>0$,\[
\lim_{\alpha\to -\infty}\liminf_{n\to\infty}\max_{(c_1,\ldots,c_k)\in \Omega_k}d_{c_1,\ldots,c_k}(n\log k+\alpha n)\ge 1-\eps
.\]
Fix $\eps>0$ and let $C=2/\eps$. For a choice of $\cC=\{c_1,\ldots,c_k\}$, let $\cR_t=\{\sigma_{1-}(R_1),\ldots,\sigma_{t-}(R_t)\}$ be the labels of cards chosen by the right hand up to time $t$.
We let $T$ denote the first time that exactly $C$ of the $k$ special cards are yet to be selected with the right hand in the evolution of $\sigma_t$, \[
T=\inf\left\{t\ge0:\,|\cR_t\cap\cC|=k-C\right\}
.\] We let $E(\mu)$ be the event that permutation $\mu\in S_n$ has more than $C$ fixed points.  Since $\{T> t\}\subseteq E(\sigma_{t})$, it suffices to show that $\P(T>n\log k -\alpha n)\ge 1-\eps/2$ and $\P(E(\pi))\le \eps/2$
for $n$ and then $\alpha$ sufficiently large for a uniformly chosen permutation $\pi$.

We denote by $X_t$ the number of special cards that have not been moved by time $t$ in the evolution of $\sigma_t$. We show $X_t$ is concentrated around its mean.  We let $A_i(t)$ be the event that card $c_i$ has not been selected by time $t$.  Then we can write $
X_t=\sum_{i=1}^k\indic{A_i(t)},$ and so $\E(X_t)=k(1-1/n)^t$. Furthermore,
\begin{align*}
  \E(X_t^2)&=\E\Big(\sum_{i=1}^k\indic{A_i(t)}+\sum_{i=1}^k\sum_{j\neq i}\indic{A_i(t)\cap A_j(t)}\Big)\\
&=k(1-1/n)^t+k(k-1)(1-2/n)^t.\\
\intertext{Thus we have}
\var(X_t)&=k(1-1/n)^t+k(k-1)(1-2/n)^t-k^2(1-1/n)^{2t}\\
&<k(1-1/n)^t+k^2\left[(1-2/n)^t-(1-1/n)^{2t}\right]\\
&<k(1-1/n)^t.
\intertext{Recall that $k\to\infty$ as $n\to\infty$.  By Chebyshev's inequality it follows that}
\P(X_t>C)&\ge 1-\P(|X_t-\E(X_t)|>\E(X_t)-C)\\
&\ge 1-\frac{\var(X_t)}{(\E(X_t)-C)^2}\\
&> 1-\frac{k(1-1/n)^t}{(k(1-1/n)^t-C)^2}\\
&\ge 1-\eps/2,
\end{align*}
for all $n$ and then $\alpha$ sufficiently large, with $t=n\log k-\alpha n$. We are left to show that $\P(E(\pi))\le \eps/2$ for all $n$ sufficiently large. However, this follows trivially by Markov's inequality since the expected number of fixed points in a uniformly chosen permutation converges to 1 as $n\to\infty$ and $C=2/\eps$.

For statement (iii), we let $E(\mu)$ be the event that special card $c_1$ is in location $\sigma_0^{-1}(c_1)$ in permutation $\mu$.  Clearly, for a uniformly chosen permutation $\pi$, $E(\pi)=1/n$.  On the other hand, $\P(\sigma_t)\ge \P(\text{Geom}(1/n)>t)=(1-1/n)^t$ and thus \[
\liminf_{n\to\infty}\max_{(c_1,\ldots,c_k)\in \Omega_k}d_{c_1,\ldots,c_k}(n\log k+\alpha n)\ge e^{-\log k-\alpha}\ge e^{-\alpha}/K
.\] It follows that $d(\alpha)$ defined as \[
d(\alpha):=\lim_{n\to\infty}\max_{(c_1,\ldots,c_k)\in \Omega_k}d_{c_1,\ldots,c_k}(n\log k+\alpha n)
,\]
lies somewhere in the shaded region of Figure \ref{F:nocutoff}.
\end{proof}

\begin{figure}[h!]
\begin{center}
\includegraphics[width=80mm]{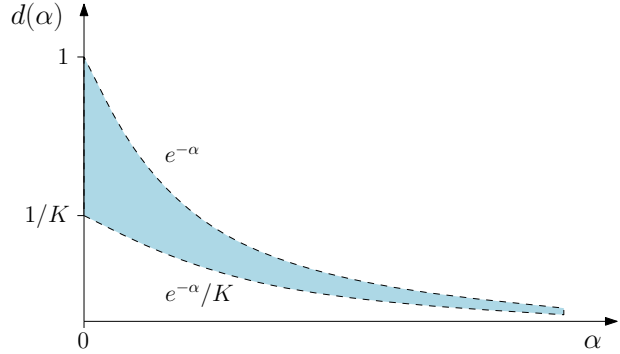} \caption{No cutoff for finite $k$}\label{F:nocutoff}
\end{center}
\end{figure}

We consider now the random-to-random transposition shuffle. We obtain cutoff by improving the upper bound on the mixing time of 1 card and again using coupon-collector arguments for the lower bound.

\begin{proof}[Proof of Theorem \ref{T:random}]

We show that the following hold:
\begin{alignat*}{2}
  &(i).\qquad &&\limsup_{n\to\infty}\max_{(c_1,\ldots,c_k)\in \Omega_k}d_{c_1,\ldots,c_k}(0.5n\log k+\alpha n)\le e^{-2\alpha}.\qquad\phantom{aaaaaaaaaaaaaaaaaaaaaaaaaaaa}\\
  &(ii).\qquad &&\text{Suppose $k\to\infty$ as $n\to\infty$. Then}\\
  & \qquad &&\lim_{\alpha\to -\infty}\liminf_{n\to\infty}\max_{(c_1,\ldots,c_k)\in \Omega_k}d_{c_1,\ldots,c_k}(0.5n\log k+\alpha n)=1.\\
&(iii).\qquad&&\text{Suppose $k$ is bounded above by constant $K$ for all $n$.  Then}\\
&\qquad &&\liminf_{n\to\infty}\max_{(c_1,\ldots,c_k)\in \Omega_k}d_{c_1,\ldots,c_k}(0.5n\log k+\alpha n)\ge e^{-2\alpha}/K.
\end{alignat*}

  For (i) by using similar arguments to the proof in the last lemma, it suffices to show that $\max_{i\in [n]} d_i(t)\le e^{-2t(1-2/n)/n}$.  To do this we adapt the proof of Lemma \ref{onecard}: we couple now both the choices of the left hand and the right hand so that the locations of card $i$ in the two decks will become equal the first time that either $L_t$ or $R_t$ chooses card $i$ in $\pi_t$ and the other chooses a location other than $\pi_t^{-1}$ or $\sigma_t^{-1}$. Note that if both hands make choices from $\{\sigma_{t-}^{-1}(i),\pi_{t-}^{-1}(i)\}$ the cards will not become matched. We thus have \[                                                                                                                                                                                                                                                                                                                                                                                                                                                                                \P(\sigma_{t-}^{-1}(i)=\pi_{t-}^{-1}(i))\ge 1-\exp(-2t(1-2/n)/n).                                                                                                                                                                                                                                                                                                                                                                                                                                                                              \]

For (ii) we use the same argument as in the proof of (ii) in the previous lemma, but with a few modifications. We now set $T=\inf\{t\ge0:\,|(\cR_t\cup\cL_t)\cap\cC|=k-C\}$, where $\cL_t=\{\sigma_{1-}(L_1),\ldots,\sigma_{t-}(L_t)\}$ is the set of labels of cards chosen by the left hand by time $t$. We now obtain $\E(X_t)=k(1-1/n)^{2t}$ and $\E(X_t^2)=k(1-1/n)^{2t}+k(k-1)(1-2/n)^{2t}$. We thus have $\P(X_t>C)\ge 1-\eps/2$ for all $n$ and then $\alpha$ sufficiently large, with $t=0.5n\log k-\alpha n$. This complete the proof of cutoff of the partial mixing time of the random-to-random transposition shuffle.

For statement (iii), we again let $E(\mu)$ be the event that special card $c_1$ is in location $\sigma_0^{-1}(c_1)$ in permutation $\mu$.  As before we have $E(\pi)=1/n$.  On the other hand, \[\P(\sigma_t)\ge \P(\text{Geom}(1/n)>t)^2=(1-1/n)^{2t},\] and thus \[
\liminf_{n\to\infty}\max_{(c_1,\ldots,c_k)\in \Omega_k}d_{c_1,\ldots,c_k}(0.5n\log k+\alpha n)\ge e^{-\log k-2\alpha}\ge e^{-2\alpha}/K
.\] \qedhere
\end{proof}

\section{Cyclic-to-random upper bound}\label{S:cyclic}
We now consider the mixing time of one card in a deck which evolves by the cyclic-to-random transposition shuffle.  Using the coupling technique for the choices of the right hand from Lemma \ref{onecard}, when card $i$ is chosen in deck $(\pi_t)$ with the right hand, we select card $i$ in deck $(\sigma_t)$ with the right hand so that after this transposition cards $i$ will be in the same location in their respective decks. We shall also make other modifications to the choice of the right hand to speed up the time to couple the cards with label $i$ (we shall refer to them as the $i$ cards).

We denote by $R_t^\sigma$ and $R_t^\pi$ the locations chosen by the right hand at time $t$ in decks $(\sigma_t)$ and $(\pi_t)$, respectively.
The left hand chooses the same locations in each deck at every time.  Suppose at time $t$, $L_t$ selects position $\sigma_{t-}^{-1}(i)$ (i.e. card $i$). Further suppose that in a few steps at time $s$ the left hand will select location $\pi_{t-}^{-1}(i)$.  A good choice for $R^\pi_s$ would therefore be $R^\sigma_t$ since card $i$ in $\sigma_s$ will likely still be in position $R^\sigma_t$. We formalize this idea in the proof of the following lemma.

\begin{lemma}\label{onecardcyclic}
 For the cyclic-to-random transposition shuffle, there exists a constant $c>0$ such that for all $n$ sufficiently large we have \[
\max_{i\in[n]} d_i(t) \le c\exp(-t/n)((0.237)^{\lfloor 0.693t/n\rfloor}+1/n).
\]
\end{lemma}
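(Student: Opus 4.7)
The starting point is the coupling from Lemma~\ref{onecard}, which already gives the factor $\exp(-t/n)$ by forcing the right hand in $\sigma$ to pick card $i$ whenever the right hand in $\pi$ does. For cyclic-to-random we can do strictly better because the left hand's schedule is deterministic: in every block of $n$ consecutive steps (a \emph{round}) the left hand visits each position exactly once, so in every round the left hand picks card $i$ in each deck once, unless the right hand intervenes first. The plan is to turn these left-hand visits into a second coupling device that produces the extra factor $(0.237)^{\lfloor 0.693 t/n\rfloor}$.

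I would construct a coupling with two devices. Device A is the right-hand coupling of Lemma~\ref{onecard}. Device B uses the left hand: whenever the left hand picks card $i$ in exactly one of the decks (say $\sigma$ at time $t_1$), record $r := R^\sigma_{t_1}$, the new location of card $i$ in $\sigma$; at the later time $t_2$ within the same round at which the left hand visits $\pi_{t_2-}^{-1}(i)$, set $R^\pi_{t_2} := r$, so that card $i$ in $\pi$ also moves to $r$, coupling the two copies, provided card $i$ in $\sigma$ and card $i$ in $\pi$ have not been disturbed in the interim. A symmetric rule applies if the left hand picks card $i$ in $\pi$ first. On the remaining event, the right hand in $\pi$ is resampled so that the marginal distribution of $R^\pi_t$ stays uniform and independent of the past, in the spirit of the mixture construction of Lemma~\ref{L:kcoupling}.

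After verifying that the marginals are preserved, I would analyse one round conditional on the cards being uncoupled at its start. Device B succeeds unless the right hand in $\sigma$ picks $r$, or the right hand in $\pi$ picks card $i$'s position, or the left hand revisits $r$, during the gap $t_2-t_1$. Each failure event can be controlled by simple union bounds using that the gap is at most $n$ and that card $i$'s new location is uniform on $[n]$; the explicit computation should show the per-round success probability is at least $1-q$ with $q$ slightly below $0.237$. Over $t$ steps Device A contributes the $\exp(-t/n)$ factor, while Device B contributes a geometric decay whose exponent is the number of essentially \emph{independent} rounds. The factor $0.693 \approx \log 2$ reflects that consecutive rounds are correlated (the location of card $i$ is not resampled between rounds), forcing a thinning argument to extract independence. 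The additive $\exp(-t/n)/n$ term absorbs edge cases such as the first partial round and rounds starting with $\sigma_{t-}^{-1}(i) = \pi_{t-}^{-1}(i)$.

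The main obstacle I expect is verifying that Device B preserves the correct right-hand marginals despite the strong dependence of $R^\pi_{t_2}$ on $R^\sigma_{t_1}$; this dictates the precise form of the auxiliary randomization on the residual event. A secondary challenge is the sharp per-round analysis, in particular identifying the correct thinning so that the exponent in the decay is $\lfloor 0.693\, t/n\rfloor$ rather than some larger factor that would weaken the bound.
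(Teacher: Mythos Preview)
Your Device~B is indeed the right mechanism, and the paper uses essentially the same idea: when the left hand picks card $i$ in one deck at time $\tau$, remember $r=R_\tau$ and, when the left hand reaches the other deck's $i$ card $D$ steps later, set that right hand to $r$. But your per-round analysis has a genuine gap. The success of Device~B requires that the left hand not visit $r$ during the $D$-step gap, i.e.\ $\|r-L_\tau\|>D$. Since $r$ is uniform on $[n]$, this has probability $\approx 1-D/n$, which is \emph{not} uniformly bounded away from $0$: when the two $i$ cards are far apart ($D$ close to $n$), Device~B almost surely fails. So there is no single ``per-round success probability $1-q$'' and a simple union bound over independent rounds cannot produce $(0.237)^{\lfloor 0.693t/n\rfloor}$.

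What the paper actually does is introduce a close/far dichotomy (distance $\le \eps n$ versus $>\eps n$) and couple the card process to a three-state Markov chain on $\{C,F,S\}$: from $C$ you jump to $S$ with probability $\approx 1-\eps/2$ (Device~B succeeds because $D\le \eps n$), otherwise to $C$ or $F$ depending on where the selected $i$ card lands; from $F$ you move to $C$ with probability $\approx 2\eps$. One step of this chain consumes at most $(1+\eps)n$ shuffle steps. The constant $0.237$ is the second eigenvalue of this chain's transition matrix, minimised over $\eps$ (optimum $\eps\approx 0.442$), and $0.693\approx 1/(1+\eps)$ is the conversion factor between shuffle time and chain steps---it is not $\log 2$, and no thinning argument is involved. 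Finally, the additive $1/n$ is not an edge-case correction but the probability that the coupling \emph{fails} (the mixture construction needed to keep the right-hand marginals uniform forces a termination event of probability $O(1/n)$ at each phase-0 attempt). Without the close/far reduction and the resulting eigenvalue computation, your outline will not reach the stated constants.
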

\begin{proof}
We shall use the notation in which $\sigma(i)=j$ says that card $j$ is in position $i$, so that $\sigma(1)$ is the label of the top card in the deck.
  Let $\cC_t$ be the event that the locations of the cards with label $i$ in each deck are equal at time t, that is:
  \[
  \cC_t:=\{\sigma_t^{-1}(i)=\pi_t^{-1}(i)\}
  .\]

Let $\cR_t$ be the event that by time $t$ the right hand has selected card $i$ in $\pi$, that is:
  \[
  \cR_t:=\{\exists \,s\le t:\, R_{s-}^\pi=\pi_{s-}^{-1}(i)\}
  .\]
  If $R^\pi_s=\pi^{-1}_{s-}(i)$ we set $R^\sigma_s=\sigma^{-1}_{s-}(i)$ so that $\cR_t\subset\cC_t$ and therefore \begin{align}\label{C}
  \P(\cC^\complement_t)=\P(\cC^\complement_t|\,\cR_t^\complement)\P(\cR_t^\complement)=\P(\cC^\complement_t|\,\cR_t^\complement)\exp(-t/n)
  .\end{align}
We are interested in matching the locations of the two $i$ cards as quickly as possible. One property of the cyclic-to-random shuffle which we shall exploit is that if a card is not selected during a round ($n$ steps) with the right hand, then it will be selected with the left hand at least once in that round.

For each $t$, we define a random variable $\delta_t$ which takes value either 0 or 1 and which we shall refer to as the \emph{phase} of the system at time $t$.  At time $0$ we set $\delta_0=1$.
Let $M_t(i)=\max(\sigma_t^{-1}(i),\pi_t^{-1}(i))$ and $m_t(i)=\min(\sigma_t^{-1}(i),\pi_t^{-1}(i))$.  We shall define a notion of distance denoted $D_t$ between the location of card $i$ in $\sigma_t$ and the location of card $i$ in $\pi_t$. For times $t$ of phase 1, we define this distance $D_t$ to be
\begin{align*}
  D_t:=
\begin{cases}
  n-M_t(i)+m_t(i),& \text{if $\,m_t(i)<L_t\le M_t(i)$,}\\
M_t(i)-m_t(i), & \text{otherwise.}
\end{cases}
\end{align*}
Let $\eps\in(0,1/2)$ be a constant, which will be chosen later.  If at time $t$, $0<D_t\le\eps n$ we shall say the $i$ cards are \emph{close}, otherwise they are \emph{far}.
If we are in phase 1 at time t, we will enter phase 0 at time\[
\min\{s>t:\, L_s\in\{\sigma_{t-}^{-1}(i),\pi_{t-}^{-1}(i)\}\,\mathrm{ and }\, 0<D_s\le \eps n\}
.\]
This is the first time after time $s$ that the left hand selects an $i$ card when the two $i$ cards are close.

If at time $t$ we are in phase 0, which we entered at time $s<t$, we will leave it (and return to phase 1)  at time\[
\min\{r>t:\, L_r=\{\sigma_{s-}^{-1}(i),\pi_{s-}^{-1}(i)\}\setminus \{L_s\}\}
.\]
The distance $D_t$ will remain constant during times of phase 0.

The proof of this lemma uses three different coupling algorithms. The first is a coupling of the evolution of the two decks of cards, the second is a coupling of these two decks with a certain Markov chain, and the final coupling is of this Markov chain with another, simpler, Markov chain.

We first describe the coupling of the two decks of cards. For each $t\in\mathbb{N}$, let $V_t$ be an independent Bernoulli random variable with $\P(V_t=1)=1/n$.  We denote by $\tau_i$ the time of the start of the $i\xth$ phase 0. We shall define a permutation $\mu_m$ as
\[\mu_m:=\begin{cases}
\sigma_{\tau_m-},&\mbox{if }\indic{\sigma_{\tau_m-}(L_{\tau_m})=i}=1\\
\pi_{\tau_m-},&\mbox{otherwise.}
\end{cases}\]We set $\nu_m:=\{\sigma_{\tau_m},\pi_{\tau_m}\}\setminus \mu_m$.
Further, we define\[
R^\mu_{\tau_m}=\begin{cases}
  R^\sigma_{\tau_m},&\mbox{if }\mu_m=\sigma_{\tau_m-}\\
  R^\pi_{\tau_m},&\mbox{if }\mu_m=\pi_{\tau_m-},
\end{cases}
\]and similarly for $R^\nu_{\tau_m}$.
The coupling algorithm at time $t$ is as follows:
\begin{itemize}
  \item If there exists $s<t$ such that $V_s=1$, choose $R^\pi_t$ uniformly on $[n]$ and set $R^\sigma_t=R^\pi_t$. (In this situation we will have already matched the locations of the $i$ cards.)
  \item If for all $s<t$, $V_s=0$, but $V_t=1$, set $R^\pi_t=\pi_{t-}^{-1}(i)$ and $R^\sigma_{t}=\sigma_{t-}^{-1}(i)$. (This matches the locations of the $i$ cards.)
  \item If for all $s\le t$, $V_s=0$, \begin{itemize}
                                          \item if $i\notin \{\pi_{t-}(L_t),\sigma_{t-}(L_t)\}$, choose independently $R^\pi_t$ uniformly on $[n]\setminus\pi_{t-}^{-1}(i)$ and $R^\sigma_t$ uniformly on $[n]\setminus\sigma_{t-}^{-1}(i)$.
                                          \item if $i\in \{\pi_{t-}(L_t),\sigma_{t-}(L_t)\}$ and $D_t=0$, set $R_t^\sigma=R_t^\pi$ to be chosen uniformly on $[n]\setminus\sigma^{-1}_{t-}(i)$.
                                          \item if $i\in \{\pi_{t-}(L_t),\sigma_{t-}(L_t)\}$, $D_t\neq0$ and there does not exist an $m$ with $t=\tau_m+D_{\tau_m}$, we choose independently $R_t^\pi$ uniformly on $[n]\setminus\pi^{-1}_{t-}(i)$ and $R_t^\sigma$ uniformly on $[n]\setminus\sigma^{-1}_{t-}(i)$. However, if in fact there exists $\bar m$ with $t=\tau_{\bar m}$ and $R^\mu_t=\nu^{-1}_{\bar m}(i)$, we shall say that the coupling fails and terminate.
                                          \item if $i\in \{\pi_{t-}(L_t),\sigma_{t-}(L_t)\}$, $D_t\neq0$ and there exists $m$ with $t=\tau_m+D_{\tau_m}$, we choose $R^\mu_t$ uniformly on $[n]\setminus R^\mu_{\tau_m}$ and then toss a coin which lands heads with probability $1/(n-1)$.  If it lands heads we say the coupling fails and terminate.  If it lands tails we set $R^\nu_t=R^\mu_{\tau_m}$.
                                        \end{itemize}
\end{itemize}

Note that if at any time we have $D_t=0$ then $D_s=0$ for all $s\ge t$.  Figure \ref{F:cardalg} shows a possible situation in which the locations of the $i$ cards become matched at the end of a phase 0.

\begin{figure}[h!]
\begin{center}
\includegraphics[width=90mm]{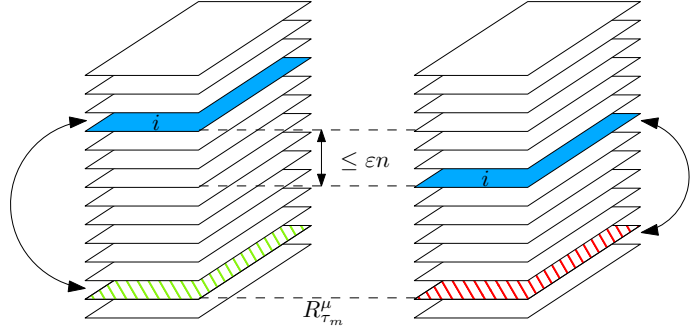} \caption{In this diagram showing how cards can become matched, position 1 is the top of the deck and $n$ is the bottom}\label{F:cardalg}
\end{center}
\end{figure}

It is clear that the distribution of $R^\pi_t$ is independent of $(R^\pi_1,\ldots, R^\pi_{t-1})$ and similarly $R^\sigma_t$ is independent of $(R^\sigma_1,\ldots, R^\sigma_{t-1})$. We now show uniformity. For all times $t$ we have \[\P(\sigma_{t-}(R^\sigma_t)=i)=\P(\pi_{t-}(R^\pi_t)=i)=\P(V_t=1)=1/n.\]

For times $t\notin\{\tau_m+D_{\tau_m},\tau_m\}$ for any $m$ and $j\neq i$, \[\P(\sigma_{t-}(R^\sigma_t)=j)=\P(\pi_{t-}(R^\pi_t)=j)=\Big(\frac1{n-1}\Big)\Big(\frac{n-1}{n}\Big)=1/n.\]  For $t=\tau_m$ for some $m$, and $j\notin\{\mu^{-1}_m(i),\nu^{-1}_m(i)\}$\[
\P(R^\mu_{\tau_m}=j)=\Big(\frac1{n-1}\Big)\Big(\frac{n-1}{n}\Big)=1/n.\] For $t=\tau_m+D_{\tau_m}$ for some $m$, and $j\notin\{\mu^{-1}_m(i),\nu^{-1}_m(i)\}$,\[
\P(R^\nu_t=j)=\Big(\frac{n-1}{n}\Big)\Big(\frac{n-2}{n-1}\Big)\P\left(R^\mu_{\tau_m}=j\,|\,R^\mu_{\tau_m}\notin\{\mu_m^{-1}(i),\nu^{-1}_m(i)\}\right)=1/n.\]
The reason for having the conditional probability in the previous equation is since if there exists such a $t$, we know we have not had either success or failure by this time ($R^\mu_{\tau_m}=\mu^{-1}_m(i)$ iff $V_{\tau_m}=1$ which implies coupling success and $R^\mu_{\tau_m}=\nu_m^{-1}(i)$ implies coupling failure).

For $a,b\in\{1,\ldots,n\}$, define
\[\|a-b\|:=\left\{
  \begin{array}{ll}
    a-b, & \hbox{if $a>b$;} \\
    n-b+a, & \hbox{otherwise.}
  \end{array}
\right.\]

We note that with this definition $\|a-b\|=n-\|b-a\|$.

Let \[\tc=\min\{\min\{t\ge1:\, V_t=1\},\min\{t\ge 1:\, \exists\, m:\tau_m\le t\,\,\mathrm{and}\,\,\|R^\mu_{\tau_m}-L_{\tau_m}\|> D_{\tau_m}\}\},\]be the first time the locations of the $i$ cards become matched.
We now define the coupling of the two decks with a 3-state Markov chain, denoted $(X_t)_{t\ge0}$. This coupling uses the $(R^\sigma_t)_{t\ge1}$ and $(R^\pi_t)_{t\ge1}$ which we have just constructed. This discrete-time Markov chain has state space $\{C,F,S\}$. This will correspond to the $i$ cards being close, far and coupled (success state) respectively. We shall construct the chain so that one step of it corresponds to at most $(1+\eps)n$ steps of the card shuffling. Our interest is in showing that
\[
\P(\cC_t^\complement|\,\cR_t^\complement)\le \P(X_{\lfloor t/(1+\eps)n\rfloor}\neq S)
,\]
for some absorbing state $S$. We note that we only require coupling the evolution of the decks with a Markov chain for times $t$ with event $\cR_t^\complement$ holding. Therefore in the following discussion we assume that this event holds for all times referred to.

We begin the coupling by constructing a sequence of independent random variables denoted $(U_m)_{m\ge1}$, each uniform on $\{1,\ldots,\eps n\}.$ The desired property of these random variables is that for each $m\ge1$ with $\tau_m<\tc$, $U_m\ge D_{\tau_m}$, almost surely.

If $\exists \ell< m$ such that $\|R^\mu_{\tau_\ell}-L_{\tau_\ell}\|>D_{\tau_\ell}$, we simply choose $U_m$ uniformly on $\{1,\ldots,\eps n\}.$


Conditionally on $\tau_m<\tc$, we define three disjoint events, denoted $E^1_m$, $E^2_m$, and $E^3_m$:
\begin{itemize}
  \item $E^1_m:=
\exists\, s\in[\tau_m+\|R^\mu_{\tau_m}-L_{\tau_m}\|,\,\tau_m+\|R^\mu_{\tau_m}-L_{\tau_m}\|+(1-\eps)n),$ such that \mbox{$\mu_s(L_s)=i,$} $R_s^\mu\in[R^\mu_{\tau_m}-\eps n,R^\mu_{\tau_m}).$ We denote by $s_m$ this value of $s$.
  \item $E^2_m:=
\exists\, s\in[\tau_m+\|R^\mu_{\tau_m}-L_{\tau_m}\|,\,\tau_m+\|R^\mu_{\tau_m}-L_{\tau_m}\|+(1-\eps)n),$ such that \mbox{$ \mu_s(L_s)=i$}, $R^\mu_s\in[R^\mu_{\tau_m},\min(R^\mu_{\tau_m}+s-\tau_m-\|R^\mu_{\tau_m}-L_{\tau_m}\|,R^\mu_{\tau_m}+\eps n)].$
We denote by $s_m$ this value of $s$.
  \item $E^3_m:=(E^1_m\cup E^2_m)^\complement $
\end{itemize}

Figure \ref{F:events} shows possible trajectories for events $E^1_m$ and $E^2_m$ (for event $E^2_m$ we show two possible trajectories for one of the $i$ cards).

\begin{figure}[h!]\label{F:events}
\begin{center}
\subfigure{
\includegraphics[width=65mm]{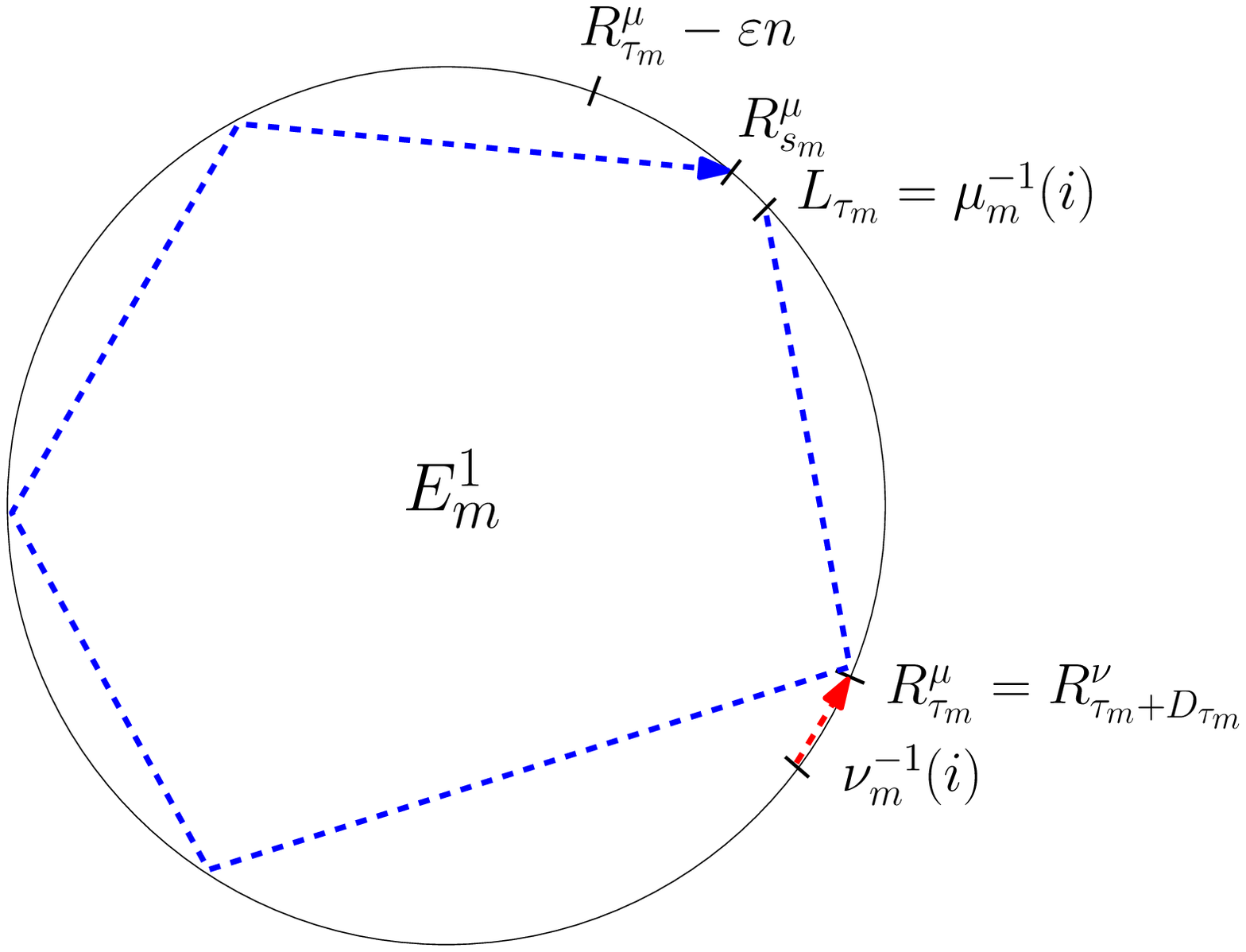}
}
\subfigure{
\includegraphics[width=65mm]{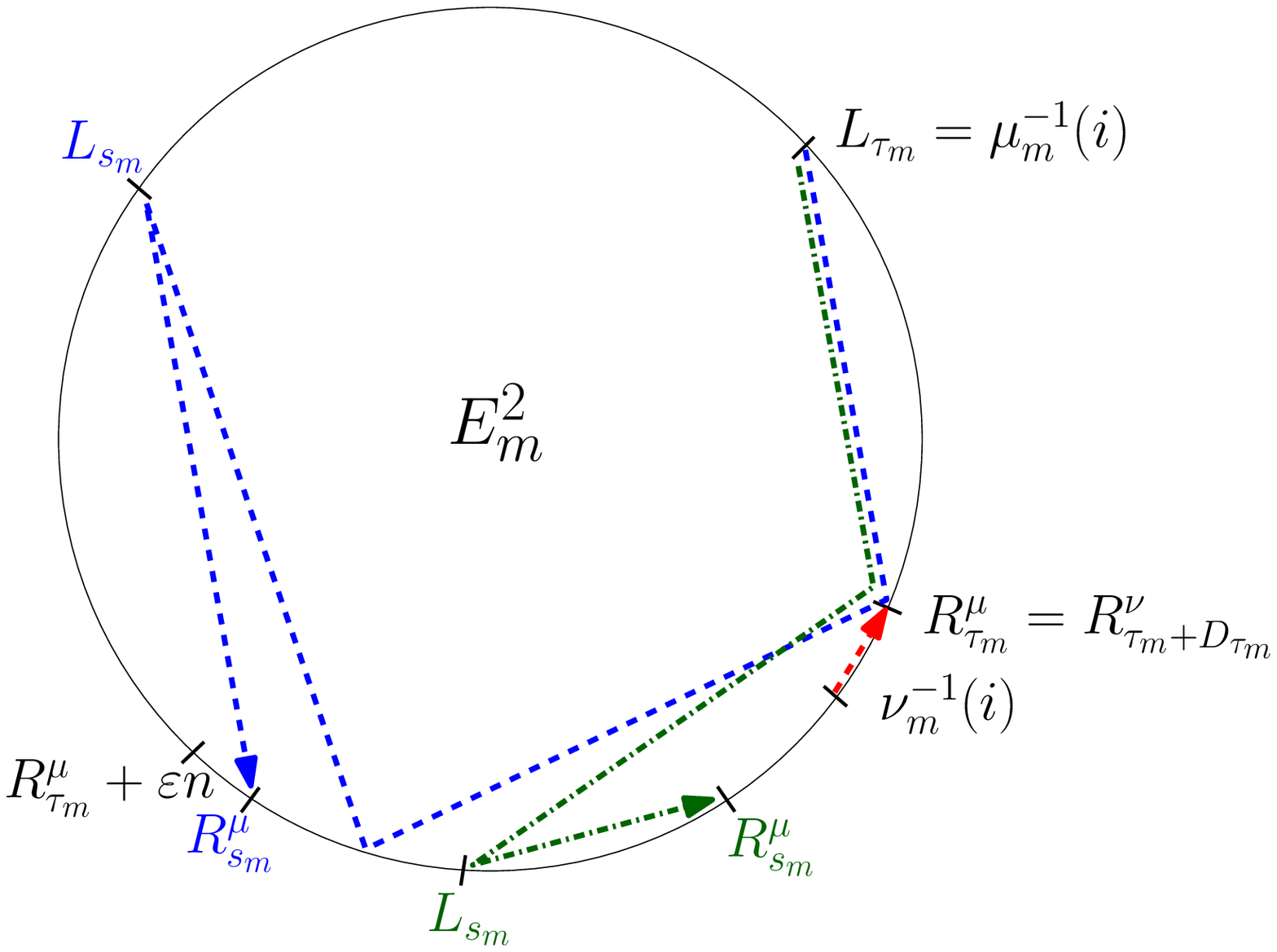}
}
\caption{In these diagrams showing card trajectories, the left hand cycles clockwise around the circles}
\end{center}
\end{figure}

If, conditionally on $\tau_m<\tc$, event $E^1_m$ holds for some $s_m$, set $U_{m+1}=\| R^\mu_{\tau_m}-R^\mu_{s_m}\|=D_{\tau_{m+1}}$. If however, event $E^2_m$ holds we set $U_{m+1}$ equal to $\|R_{s_m}^\mu-R^\mu_{\tau_m}\|$ with probability $\alpha/(\eps n)$ and with probability \mbox{$1-\alpha/(\eps n)$} we choose $U_{m+1}$ uniformly from the set $\{\alpha+1,\ldots,\eps n\}$, where \mbox{$\alpha=\min(\|L_{s_m}-R^\mu_{\tau_m}\|,\eps n)$.} If $E^3_m$ holds we set $U_{m+1}=D_{\tau_{m+1}}$.

We have to check that under this construction of the $U_m$, they are all independent, uniform on $[\eps n]$, and satisfy $U_m\ge D_{\tau_m}$ almost surely. For the independence, we note that the construction of $U_{m+1}$ is independent of any of the previous distances, $D_1,\ldots,D_m$, and therefore is independent of $U_1,\ldots,U_m$.  If events $E^1_m$ or $E^3_m$ hold, it is clear that $U_{m+1}$ is uniform on $[\eps n]$ (for event $E^1_m$, this follows from $R^\mu_{s_m}$ being uniform on $[R^\mu_{\tau_m}-\eps n,R^\mu_{\tau_m})$). If event $E^2_m$ holds, uniformity follows from the fact that $\|R^\mu_{s_m}-R^\mu_{\tau_m}\|$ is uniform on $[\alpha]$ and symmetry. The almost sure inequality $U_{m+1}\ge D_{\tau_{m+1}}$ is clear since $D_{\tau_{m+1}}=\|R_{s_m}^\mu-R^\mu_{\tau_m}\|$.

Next we note that the probability of event $E^1_m\cup E^2_m$ does not depend on the value of $D_{\tau_m}$.  We calculate the probability of this union.

Let $p_s=\P(E^1_m\cup E^2_m\,|\,\mu_s(L_s)=i)$. We are interested in calculating $p_{\tau_m+\|R^\mu_{\tau_m}-L_{\tau_m}\|}$. Note that our conditioning on $\cR^\complement_{\tau_m+(1+\eps n)}$ means that when an $i$ card is selected with the left hand it is not able to stay in that position but jumps uniformly to all other $n-1$ possible positions.  For simplicity of notation during this calculation we shall subtract $\tau_m+\|R_{\tau_m}^\mu-L_{\tau_m}\|$ from the time index. Thus we are interested in $p_0$.  For $s\in(\eps n, n-\eps n)$, \[
p_s=\frac{2\eps n}{n-1}+\sum_{r=s+1}^{n-\eps n-1}\frac{p_r}{n-1}
.\]
We solve this using $p_{n-\eps n-1}=2\eps n/(n-1)$, to deduce that for $s\in(\eps n, n-\eps n)$, \begin{align}
p_s=2\eps(1-1/n)^{s+\eps n-n}\label{ps}
.\end{align}
For $s\in [0,\eps n]$, \[
p_s=\frac{\eps n}{m-1}+\frac{s-1}{n-1}+\sum_{r=s+1}^{\eps n}\frac{p_r}{n-1}+\sum_{r=\eps n}^{n-\eps n-1}2\eps (1-1/n)^{r+\eps n -n}
.\]Using equation (\ref{ps}) we deduce that for $s\in [0,\eps n]$, \[
p_s=1+2\eps(1-1/n)^{s+\eps n -n}-(1-1/n)^{s-1-\eps n},\] and so, in particular, \[
p_0=1+2\eps(1-1/n)^{\eps n-n}-(1-1/n)^{-\eps n-1}
.\]

We now describe how the Markov chain jumps. We shall embed the times of the jumps of the Markov chain into the time of the shuffling process.  This will enable us to prove (by induction) that at time $t$ of the card shuffling, at least $\lfloor t/((1+\eps)n)\rfloor$ jumps of the Markov chain have been made. Furthermore, our construction of the coupling (of the Markov chain with the card shuffling) ensures that if the locations of the two $i$ cards have not become matched by time $t$ then the Markov chain will not be at state $S$ after $\lfloor t/((1+\eps)n)\rfloor$ jumps. Our embedding of the jump-times is such that the chain can only jump at times $t$ such that $L_t\in\{\sigma_{t-}^{-1}(i), \pi^{-1}_{t-}(i)\}$. To begin the inductive process, consider the first time the left hand selects an $i$ card. If the cards are close at this time, we start the Markov chain from state $C$ at this time, otherwise we start the Markov chain from state $F$.

Let \[
M_1:=\inf\{m\ge1:\,\|R^\mu_{\tau_m}-L_{\tau_m}\|>D_{\tau_m}\},\quad M_2:=\inf\{m\ge1:\,\|R^\mu_{\tau_m}-L_{\tau_m}\|>U_m\}
.\]

We note that if $\|R^\mu_{\tau_m}-L_{\tau_m}\|=D_{\tau_m}$, we terminate the couplings at time $\tau_m$ and say they have failed.  Also, note that $M_1\le M_2$ almost surely.
Suppose that the chain has just jumped to state $C$ (either from state $C$ or state $F$). This corresponds to a time $\tau_m$ (start of the $m\xth$ phase 0). If we have $m=M_2$, the chain will next jump to state $S$ at time $\tau_m+\eps n$.  At this time the locations of the two $i$ cards will certainly be matched. If $m=M_1<M_2$, we run the Markov chain independently from the evolution of the cards from time $\tau_m$, according to the transition matrix $P$ given below with jumps every $(1+\eps)n$ steps and with the first jump (at time $\tau_m+(1+\eps)n$) conditioned on not going to state $S$. If $m<M_1$, we are in one of the following three situations:
\begin{enumerate}
  \item $E^1_m$ occurs. Then the next phase 0 starts at time $$\tau_{m+1}=\tau_m+\|R^\mu_{\tau_m}-L_{\tau_m}\|+\|R^\mu_{s_m}-R^\mu_{\tau_m}\|\le \tau_m+\eps n +n.$$ We make the chain jump to state $C$ at this time.
  \item $E^2_m$ occurs. Then the next phase 0 starts at time $$\tau_{m+1}=\tau_m+n+\|R^\mu_{\tau_m}-L_{\tau_m}\|\le\tau_m+n+\eps n.$$ We make the chain jump to state $C$ at this time.
  \item $E^3_m$ occurs.  Then the next jump of the chain is at time $$\inf\{t>\tau_m+D_{\tau_m}:\, L_t\in\{\sigma^{-1}_{t-}(i),\pi^{-1}_{t-}(i)\}\},$$ and it jumps to state $F$. Note that this $t$ satisfies $t\le \tau_m+n$.
\end{enumerate}
In each of these situations the amount of time we have to wait after time $\tau_m$ until the next jump of the Markov chain is less than $(1+\eps) n$.

On the other hand, suppose the chain has just moved to state $F$ (either from state $F$ or from state $C$).  At this time the left hand selects an $i$ card.  If it is moved to a location so that the two $i$ cards are close, we wait until the next time an $i$ card is selected (this will be the start of the next phase 0 and will be less than $n$ steps later) and at this time make the Markov chain jump to state $C$. On the other hand, if it is not moved to such a location, we make the chain jump to state $F$ the next time an $i$ card is selected with the left hand (again this will be less than $n$ steps later). This completes the inductive step, showing that each step of the Markov chain takes at most $(1+\eps)n$ time steps of the shuffling.

%

The fact that this does indeed give a process which is Markov follows from the $U_m$ being independent, identically distributed random variables and the event $E^1_m\cup E^2_m$ being independent from the process up to time $\tau_m$.

Consider time $\tau_m$: the start of the $m\xth$ phase 0, for $m\le M_1$. Using the uniformity of $U_m$, the probability the chain next jumps to state $S$ is\[
\P(\|R^\mu_{\tau_m}-L_{\tau_m}\|>U_m)=\sum_{x=1}^{\eps n} \frac1{\eps n}\P(\|R^\mu_{\tau_m}-L_{\tau_m}\|>x)=
1-\frac{\eps n +1}{2(n-1)}
.\]
Using $p_0$ as calculated above we further deduce that (if currently at state $C$) the probability the chain next jumps to state $C$ is\[
\frac{\eps n+1}{2(n-1)}\left(1-(1-1/n)^{-\eps n-1}+2\eps (1-1/n)^{\eps n-n}\right)
.\]This leaves us with a probability of \[
\frac{\eps n+1}{2(n-1)}\left((1-1/n)^{-\eps n-1}-2\eps (1-1/n)^{\eps n-n}\right)
\]
that the chain jumps to $F$ if at state $C$.

Let $g(n)=1-1/n$.  We obtain the following transition matrix of the Markov chain $(X_m)_{m\ge 1}$ (here the first row/column corresponds to state $C$, the second to state $F$ and the third to state $S$):\[
P:=\left(
     \begin{array}{ccc}
       \frac{\eps n+1}{2(n-1)}\left(1-g(n)^{-\eps n-1}+2\eps g(n)^{\eps n-n}\right) & \frac{\eps n+1}{2(n-1)}\left(g(n)^{-\eps n-1}-2\eps g(n)^{\eps n-n}\right) & 1- \frac{\eps n+1}{2(n-1)}\\
       \frac{2\eps n}{n-1} & 1- \frac{2\eps n}{n-1}& 0\\
       0& 0 & 1\\
     \end{array}
   \right)
.\]

The probability of the chain jumping from $C$ to $S$ includes the possibility of termination of the couplings.  We now fix a $\xi>0$ and note that for all $n>n_0$ for some sufficiently large $n_0$ we have both \begin{align}
\frac{\eps n +1}{2(n-1)}<\frac{\eps}{2}+\xi \label{E:cond1}
\end{align}

and \begin{align}
(1-1/n)^{-\eps n-1}-2\eps (1-1/n)^{\eps n-n}<\left(e^\eps-2\eps e^{-\eps +1}\right)+\xi.\label{E:cond2}\end{align}

We define $\tilde P$ to be the following matrix:\[
\tilde P:=\left(
            \begin{array}{ccc}
              \frac{\eps}{2}\left(1-e^\eps+2\eps e^{1-\eps} +\xi\right) & \frac{\eps}{2}\left(e^\eps-2\eps e^{1-\eps}\right) +\xi(1-\eps/2)& 1-\frac{\eps}{2}-\xi \\
              2\eps & 1-2\eps & 0 \\
              0 & 0 & 1 \\
            \end{array}
          \right)
.\]

Note that we have $P(F,F)<\tilde P(F,F)$ and $P(F,C)>\tilde P(F,C)$. Furthermore, conditions (\ref{E:cond1}) and (\ref{E:cond2}) imply that $P(C,C)>\tilde P(C,C)$, $P(C,S)>\tilde P(C,S)$ and $P(C,F)<\tilde P(C,F)$.  Therefore, performing the obvious coupling between Markov chain $(X_m)_{m\ge1}$ and a Markov chain $(\tilde X_m)_{m\ge1}$ with transition matrix $\tilde P$ we have for all $n>n_0$, and each $m$, $\P(X_m\neq S)\le \P(\tilde X_m\neq S)$. At this point we choose the value of $\eps$ which minimizes the second largest eigenvalue of matrix $\tilde P$ (we take $\xi$ arbitrarily close to 0). We find (numerically) that this optimal value is about 0.442. With this value of $\eps$ we obtain the value $\lambda$ of the second largest eigenvalue to be approximately 0.237. We deduce that there exists a constant $\kappa$ such that for all $n$ sufficiently large (and regardless of starting location), $\P(\tilde X_m\neq S)\le\kappa(0.237)^m$. Finally we calculate the probability of termination occurring before we couple the locations of the two $i$ cards.  Note that termination occurs with probability $2/n$ for every phase 0 that we encounter. On the other hand, we couple the locations of the $i$ cards with probability $1-\eps$ during each phase 0. Therefore the probability that termination occurs before the couplings are successful is $2/(n(1-\eps))$.

We therefore obtain that $\P(\cC^\complement_t|\,\cR^\complement_t)\le c((0.237)^{\lfloor t/1.442n\rfloor}+1/n)$ for some constant $c$ and thus using equation (\ref{C}) we have $\P(\cC^\complement_t)\le c e^{-t/n}((0.237)^{\lfloor 0.693t/n\rfloor}+1/n),$ as required.\qedhere
\end{proof}

\begin{rmk}
 There are several places in which the coupling argument could be strengthened (although this would result in the dynamics becoming more complicated). One of these ways would be to improve on the amount of time we wait between successive times of going from state $F$ to state $F$. Indeed, it is clear that $n$ steps is all that is required rather than $(1+\eps)n$ (since within $n$ steps the left hand will select an $i$ card).
\end{rmk}

\begin{proof}[Proof of Theorem \ref{cyclicupper}]

  We prove this result by combining Lemma \ref{onecardcyclic} and Theorem \ref{T:main}. Taking $\delta=1/4-e^{-3/2}$, we have $\tmix^k(1/4)\le \tmix^1(e^{-3/2}/k)$. We wish to find the smallest $t$ such that $c e^{-t/n}((0.237)^{\lfloor 0.693 t/n\rfloor}+1/n)\le e^{-3/2}/k$. Solving this, we find $t\approx0.5005n(\log k+C)$, with $C=-\log(e^{-3/2}/c-1)$, which completes the proof.\qedhere
\end{proof}

\section{Further work and open problems}\label{S:further}

There are several natural further questions we can ask in relation to partial mixing.  The most interesting question is the following: Does there exist a semi-random transposition shuffle with $k$-partial mixing time of $\alpha n\log k$ (for some $k<n$, $k\to\infty$ as $n\to\infty$) but with mixing time $\beta n\log n$ for some $\beta\neq\alpha$? If this is not the case, then is there a general way to extend partial mixing to the full mixing for any semi-random transposition shuffle? Indeed, it is possible to provide bounds on the partial mixing time for larger values of $k$ using the card marking techniques of \cite{matthews}.

With regards to lower bounds, it can be shown that the partial mixing time of the cyclic-to-random transposition shuffle is $\Theta(n\log k)$ by obtaining a lower bound of approximately $0.12n\log k$ by adapting a method of \cite{peresmoss}. Determining if and when cutoff occurs for both partial and full mixing remains a challenge for this shuffle.

The method we have developed can be applied to other processes not considered here. These include shuffling by $k$-cycles, which is known to have a mixing time of $(n/k)\log n$ (shown by \cite{kcycle}) as well as general interchange processes on graphs. Indeed the top-to-random transposition shuffle is simply the interchange process on the star graph on $n$ vertices, and the random-to-random transposition shuffle is the interchange process on the complete graph on $n$ vertices. Studying the $k$-partial mixing time is then equivalent to placing just $k$ walkers onto different vertices of the graph and asking how long until they reach equilibrium. There exists a class of graphs such that for $k=o((n/\log n)^{1/2})$ we only need to calculate the mixing time of one walker on this graph to determine the $k$-partial mixing time (with the star and complete graphs being in this class).

\emph{Acknowledgements:} The author wishes to thank Nathana\"{e}l Berestycki, Yuval Peres and Perla Sousi for useful discussions.

\bibliographystyle{unsrtnat}
\bibliography{partialbib}

\end{document}